\newtheorem{thm}{Theorem}[section]
\newtheorem*{thmA}{Theorem A}
 \newtheorem{cor}{Corollary}[section]
  \newtheorem{lem}{Lemma}[section]
 \theoremstyle{definition}
 \theoremstyle{remark}
\newtheorem{rem}{Remark}[section]
 \numberwithin{equation}{section}
\newcommand{\f}{\left(}
\renewcommand{\r}{\right)}
\newcommand{\R}{\mathbb{R}}
\renewcommand{\a}{\alpha}
\renewcommand{\d}{\delta}
\renewcommand{\k}{\kappa}
\newcommand{\s}{\sigma}
\begin{document}
\title{A unified flow approach to smooth $L^p$ Christoffel-Minkowski problem for $p>1$}

\author{RUIJIA ZHANG}
\address{Department of Mathematical Sciences, Tsinghua University, Beijing 100084, P.R. China}
\email{\href{mailto:zhangrj17@mails.tsinghua.edu.cn}{zhangrj17@tsinghua.org.cn}}

%\date{\today}
\keywords{}
\subjclass[2010]{35K55; 53C44}

%%% ----------------------------------------------------------------------

\begin{abstract}
In this paper we study an anisotropic expanding flow of smooth, closed, uniformly convex hypersurfaces in $\mathbb{R}^{n+1}$ with speed $\psi\s_k(\lambda)^{\a}$, where $\a$ is a positive constant, $\s_k(\lambda)$ is the $k$-th elementary symmetric polynomial of the principal radii of curvature and $\psi$ is a preassigned positive smooth function defined on $\mathbb{S}^n$. We prove that under some assumptions of $\psi$, the solution to the flow after normalisation exists for all time and converges smoothly to a solution of the well-known $L^p$ Christoffel-Minkowski problem $u^{1-p}\f x \r \s_k \f \nabla^2u+uI\r=c\psi(x)$ for $p>1$.
\end{abstract}

\maketitle
%\tableofcontents

%%%%%%%%
\section{introduction}\label{sec:1}
\subsection{An anisotropic expanding flow} 
Expanding curvature flows have been extensively studied in the past decades.  Let $\mathcal{M}_0$ be a smooth, closed and uniformly convex hypersurfaces in $\mathbb{R}^{n+1}$ enclosing the origin. We consider the flow
\begin{equation}\label{ger1}
\left\{\begin{aligned}
\frac{\partial}{\partial t}X(x,t)=& F^{-\a}(\k)\nu(x,t),\quad \a>0 \\
X(x,0)=& X_0(x)\in \mathcal{M}_0,
\end{aligned}\right.
\end{equation}
where the curvature function $F$ is symmetric, homogeneous of degree 1, strictly monotone and concave. The solution of this class of flows exists on the maximal time $t\in[0,T^*)$(when $0<\a\leq 1$, $T^*=\infty$ and when $\a>1$, $T^*$ is finite) and converges to a sphere after a proper rescaling (see Gerhardt\cites{Ger90, Ger14} and Urbas\cite{U91}). When $\alpha=1$, Urbas \cite{U91} and Gerhardt \cite{Ger90} studied the expanding flows of convex hypersurfaces respectively. Especially, when $0<\a\leq1$, the convex assumption of the initial hypersurface is further weakened in \cite{Ger14}. Here $F$ can be chosen as symmetric curvature function $\f \frac{\s_n}{\s_{n-k}}\r ^{\frac{1}{k}}(\k)$ (see \cite{And07}), where $\s_i$ is the $i$-th elementary symmetric polynomial of principal curvatures. If we further denote $\lambda$ as the principal radii of curvature, then flow \eqref{ger1} can be written as
\begin{equation}\label{ger2}
\partial_t X=\s_k(\lambda)^{\a}\nu,\quad \a>0.
\end{equation}

Generally, we consider the following anisotropic curvature flow in this paper. Let $X:M^n \times [0,T) \rightarrow\mathbb{R}^{n+1}$ be a family of smooth, closed and uniformly convex hypersurfaces, which satisfies
\begin{equation}\label{flow-s}
\left\{\begin{aligned}
\frac{\partial}{\partial t}X(x,t)=& \psi \f \nu \f x,t \r \r \s_k(\lambda ( x,t ) ) ^{\a} \nu(x,t), \\
X(x,0)=& X_0(x),
\end{aligned}\right.
\end{equation}
where $\a>0$, $1 \leq k<n$, $\nu$ is the unit outward normal vector at $X(\cdot,t)$ and $\psi$ is a positive smooth function defined on $\mathbb{S}^n$.

Assume $\mathcal{M}$ is strictly convex. The support function $u$ is defined on $\mathbb{S}^n$ as $u(x)=\max\limits_{X\in\mathcal{M}}\langle X,x\rangle, \ \forall x\in\mathbb{S}^n$. Let $f_i=\nabla_i f$, $f_{ij}=\nabla^2_{ij} f$ and $f_{ijk}=\nabla^3_{ijk} f$, where $\nabla$ is the Levi-Civita connection  corresponding to the standard metric $g$ on $\mathbb{S}^n$. If we choose the local orthonormal frame on $\mathbb{S}^n$, then the principal radii of curvature can be represented as eigenvalues of the matrix $$b_{ij}=u_{ij}+u\delta_{ij}.$$

Analogous to the calculations in \cite{SY20}, we first derive the normalised flow of \eqref{flow-s}. Define $\tilde{X}(t) = \f \frac{|S^n|}{V_{k+1}(u,u...,u)} \r^\frac{1}{k+1} X(t)$ and 
\begin{align*}
\tau=\int_0^t\f \frac{|S^n|}{V_{k+1}(u,u,\cdots,u)}\r^{\frac{1-k\a}{k+1}}ds.
\end{align*}
We have
\begin{align*}
\frac{\partial \tilde{X}}{\partial \tau} = \psi \f \nu \r \s_{k} \f \lambda \r ^{\a} \nu- X(x,t)\int_{\mathbb{S}^n}\hspace{-1.55em}-\ \psi \s_{k} \f \lambda \r  ^{1+\a} .
\end{align*}
If no confusion arises, we substitute $t,X$ for $\tau,\tilde{X}$ and call
\begin{equation}\label{s1:flow-n}
\left\{\begin{aligned}
\frac{\partial}{\partial t}X(x,t)=& \psi \f \nu \f x,t \r \r \s_{k} \f \lambda \r ^{\a} \nu(x,t)- X(x,t)\int_{\mathbb{S}^n}\hspace{-1.55em}-\ \psi \s_{k} \f \lambda \r  ^{1+\a} d\mu_{S^n} ,%\quad k=1,\cdots,n,\\
\\
X(x,0) =&X_0(x),
\end{aligned}\right.
\end{equation}
the normalised flow of \eqref{flow-s}. Clearly, along the flow \eqref{s1:flow-n}, we have
\begin{equation}\label{normalised volume}
\int_{\mathbb{S}^{n}}u\s_k d\mu_{S^n}\equiv |S^n|.
\end{equation}
Flow \eqref{s1:flow-n} can be re-written as a parabolic scalar equation of the support function $u$:
\begin{equation}\label{u flow}
\left\{\begin{aligned}
\frac{\partial}{\partial t}u(x,t)=& \psi \f x,t \r \s_{k} \f \nabla^2 u+uI \r ^{\a} - u(x,t)\int_{\mathbb{S}^n}\hspace{-1.55em}-\ \psi \s_{k} \f \lambda \r  ^{1+\a} d\mu_{S^n} ,%\quad k=1,\cdots,n, on \mathbb{S}^n\\ 
\\
u(x,0) =&u_0(x),
\end{aligned}\right.
\end{equation}
Here we denote $\eta(t)=\int_{\mathbb{S}^n}\hspace{-1.60em}-\ \ \psi \s_{k} \f \lambda \r  ^{1+\a} d\mu_{S^n}$. Later, we will prove flow \eqref{s1:flow-n} converges to the solution of
\begin{align}\label{lp cm}
u^{1-p}\f x \r \s_k \f \nabla^2u+uI\r=c\tilde{\psi}(x) \quad {\rm on} \ \mathbb{S}^n,
\end{align}
where $c$ is a positive constant, $p=1+\frac{1}{\a}$ and $\tilde{\psi}=\psi^{-\frac{1}{\a}}$. Note that the elliptic equation \eqref{lp cm} arises naturally in the Brunn-Minkowski theory. When $k=n,p=1$, it is the well-known Minkowski problem. It concerns about the existence, uniqueness, regularities and stabilities of the prescribed Gauss curvature of hypersurfaces. Minkowski\cite{M03}, Alexandov\cites{A39,A42}, Lewy\cite{L38}, Nirenberg\cite{N53}, Calabi\cite{C58}, Pogorelov\cites{P52,P71}, Cheng-Yau\cite{CY76} etc. made great contributions to the development of this problem. In 1962, Firey \cite{F62} introduced p-sums for convex body. Later, Lutwak \cite{L93} studied the Minkowski problem for Firey's $p$-sum (known as $L^p$ Minkowski problem). $L^p$ Minkowski problem has attracted many researchers in the past years. One can refer to \cites{CW06,LO95, LW13, Zhu15} for more background and comprehensive lists of results. 
When $p=1$ and $1\leq k\leq n-1$, \eqref{lp cm} is the classical Christoffel-Minkowski problem
\begin{equation}\label{cm}
     \s_k \f \nabla^2u+uI\r=\tilde{\psi}(x).
\end{equation}
Guan and Ma \cite{GM03} established constant rank theorem to prove the existence of Eq. \eqref{cm} under the spherical convex assumption of $\tilde{\psi}^{-\frac{1}{k}}$, i.e. $\nabla^2\tilde{\psi}^{-\frac{1}{k}}+\tilde{\psi}^{-\frac{1}{k}}g\geq 0$. $L^p$ Christoffel-Minkowski problem arises naturally in the Christoffel-Minkowski problem related to $p$-sum. It aims to find convex body $\mathcal{K}\in\mathcal{K}_0^{n+1}$(the set of convex bodies containing the origin in their interiors) with prescribed $k$-th $p$-area measure. In smooth category, this class of problem reduces to the problem of finding the convex solutions to the nonlinear elliptic equation \eqref{lp cm}. Hu, Ma and Shen \cite{HMS04} generalized the constant rank theorem in \cite{GM03} to derive the existence of Eq. \eqref{lp cm} for $p\geq k+1$ under the sufficient condition
\begin{align}\label{con1}
\nabla^2\tilde{\psi}^{-\frac{1}{k+p-1}}+\tilde{\psi}^{-\frac{1}{k+p-1}}g\geq 0.
\end{align} 
For $1<p<k+1$, Guan and Xia \cite{GX18} proved the existence results of \eqref{lp cm} under the evenness assumption and condition \eqref{con1} using the constant rank theorem. At the end of \cite{GX18}, they proposed a question: Is there a direct effective estimate of the principal radii of curvature from below without use of the constant rank theorem? Ivaki \cite{Iva19} partially answer this question by using the parabolic approach to avoid employing the constant rank theorem. He obtained the convergence results of the following flow 
\begin{equation}
\frac{\partial}{\partial t}X= \psi \f \nu  \r u^{2-p}\s_k(\lambda) ^{\a} \nu
\end{equation}
and proved the existence results of Eq. \eqref{lp cm} for $p\geq k+1$ under the assumption, $\nabla^2\psi^{-\frac{1}{k+p-1}}+\psi^{-\frac{1}{k+p-1}}g>0$. Furthermore, Bryan, Ivaki and Scheuer \cite{BIS21} provided a parabolic proof to the existence results of Christoffel-Minkowski problem \cite{GM03} by using a constrained curvature flow. It still remained a confusing problem whether this problem can be settled with a parabolic approach for $1<p< k+1$. In this paper, we provide a proof to Guan-Xia's \cite{GX18} results by using an expanding curvature flow without using the constant rank theorem. We partially answer the first question in \cite{GX18}.

We mainly get the following result.
\begin{thm}\label{main-thm-1}
Let $X_0: M^n \to \mathbb \R^{n+1} $ be a smooth, closed, uniformly convex and origin-symmetric hypersurface in $\mathbb R^{n+1}$. Assume $1\leq k\leq  n-1$ and $\a> \frac{1}{k}$, $\psi$ is a positive smooth even function on $\mathbb{S}^n$ and $\nabla^2 \psi^{\frac{1}{1+k\a}}+\psi^{\frac{1}{1+k\a}}g$ is positive-definite. Then the flow \eqref{flow-s} has a unique smooth, uniformly convex solution for finite existence maximal time $t\in [0,T^*)$. After a proper normalisation, the solution of the flow exists for all time and the rescaled hypersurfaces $\tilde{M_t}$ converge to the unique smooth solution of \eqref{lp cm} in the $C^\infty$ topology.
\end{thm}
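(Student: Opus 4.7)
My plan is to follow the standard PDE strategy for convergence of anisotropic curvature flows: short-time existence, a priori $C^0$--$C^2$ estimates for the support function $u$, Krylov–Evans type estimates to upgrade to $C^\infty$ bounds, long-time existence, and finally convergence via a monotone functional. The novelty and main difficulty sit almost entirely in the $C^2$ lower bound on the principal radii of curvature.

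First, short-time existence of a smooth uniformly convex solution to \eqref{u flow} is standard for concave, degree $k\alpha$ fully nonlinear parabolic equations. To obtain the $C^0$ estimate, I would introduce a monotone functional along \eqref{u flow} analogous to the one used in Ivaki \cite{Iva19}, namely something of the shape
\begin{align*}
\mathcal{J}(u)=\int_{\mathbb{S}^n}\tilde\psi\,u^{1-p}\,d\mu_{S^n}\qquad\text{paired with}\qquad V(u)=\int_{\mathbb{S}^n}u\sigma_k\,d\mu_{S^n}=|S^n|,
\end{align*}
and check that it is non-increasing, with vanishing time derivative exactly at solutions of \eqref{lp cm}. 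Together with the normalisation \eqref{normalised volume}, the evenness of $u(\cdot,t)$ (preserved by the flow because $\psi$ is even and the initial datum is origin-symmetric), and a standard argument bounding $\max u$ by $\min u$ for even support functions, the monotonicity yields $0<c_1\le u(\cdot,t)\le c_2<\infty$ uniformly in $t$.

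Next I would obtain uniform two-sided bounds on the speed $\psi\sigma_k^\alpha-\eta u$ by applying the maximum principle to the evolution equation of $\psi\sigma_k^\alpha$ (or of $\log\sigma_k$), which gives a uniform upper bound on $\sigma_k$; together with the $C^0$ bound this yields uniform bounds on $\eta(t)$. The decisive step is the lower bound on the principal radii, equivalently a lower bound on the smallest eigenvalue of $b_{ij}=u_{ij}+u\delta_{ij}$. Here I would consider the auxiliary quantity
\begin{align*}
W=\log\!\bigl(b_{ij}\xi^i\xi^j\bigr)-A\log\psi^{\frac{1}{1+k\alpha}}+B|\nabla u|^2-Cu,
\end{align*}
or a rescaled variant, evaluated at a point where the smallest eigenvalue of $b_{ij}$ is achieved, and apply the parabolic maximum principle. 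In the resulting inequality the gradient terms in $\psi^{1/(1+k\alpha)}$ can be absorbed into the good terms coming from $W_{ij}\ge0$ precisely because of the hypothesis
\begin{align*}
\nabla^2\psi^{\frac{1}{1+k\alpha}}+\psi^{\frac{1}{1+k\alpha}}g>0,
\end{align*}
which plays the analytic role of the constant rank condition in \cite{GM03,GX18}. This replaces the pointwise constant rank theorem by a one-shot differential inequality and is the part I expect to be hardest; the algebraic matching between the exponent $\tfrac{1}{1+k\alpha}$ and the homogeneity $k\alpha$ of the speed in $b$ is exactly what makes the computation close up.

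With uniform two-sided bounds on $b_{ij}$, the equation becomes uniformly parabolic and concave in $b_{ij}$, so Krylov–Evans and parabolic Schauder theory deliver uniform $C^{k,\gamma}$ bounds of every order, hence long-time existence. For convergence, monotonicity of $\mathcal{J}$ together with compactness in $C^\infty$ produces subsequential limits $u_\infty$ with $\tfrac{d}{dt}\mathcal{J}(u_\infty)=0$, which forces $u_\infty$ to solve \eqref{lp cm}. The uniqueness of the even solution of \eqref{lp cm} for $p>1$ (by the standard $L^p$-Brunn–Minkowski uniqueness in the even case) upgrades subsequential convergence to full $C^\infty$ convergence, completing the proof of Theorem \ref{main-thm-1}.
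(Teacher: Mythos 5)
The outline is broadly in the right order, but the central step of the whole paper is glossed over with an argument that does not work. You write that the lower bound $u\ge c_1>0$ follows from the normalisation \eqref{normalised volume}, evenness, and ``a standard argument bounding $\max u$ by $\min u$ for even support functions.'' No such standard argument exists in this setting: an origin-symmetric convex body can be arbitrarily thin in one direction (small $u_{\min}$) while keeping $u_{\max}$ and even $\int u\sigma_k$ bounded away from zero when $k<n$. The paper itself flags precisely this as the obstacle (``there is a lack of the uniform lower bounds of $\vol(\Omega_t)$ under the flow''), which is why the case $1<p<k+1$ was open and why the maximum principle applied directly to \eqref{u flow} does not give $C^0$ bounds as it does in \cite{Iva19,SY20}. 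The paper's resolution is the refined gradient estimate of Lemma \ref{c1}, $|\nabla u|^2/u^\gamma\le C$ for some $\gamma\in(0,1)$, proved by a delicate maximum-principle computation on $\log Q$ with the choice of $\gamma$ depending on a case analysis of $u_{11}$ versus $u_{22}$, followed by a geometric contradiction argument (Theorem \ref{lower bound}) that uses this estimate along a planar cross-section joining the points where $u$ is smallest and largest. That two-step argument is the genuine novelty and cannot be replaced by a soft evenness argument; your proposal as stated would collapse at this point.

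Two smaller remarks. First, the monotone functional you write, $\int\tilde\psi\,u^{1-p}$, has the wrong exponent: the quantity the paper shows is non-increasing (via Andrews' generalized H\"older inequality and the normalisation) is $\mathcal J(u)=\int_{\mathbb S^n}\psi^{-1/\alpha}u^{1+1/\alpha}\,d\mu_{S^n}$, i.e.\ $u^{p}$ up to the weight, not $u^{1-p}$; the sign of the exponent matters because the upper bound $u\le C$ is extracted from $\mathcal J$ decreasing together with $u(x)\ge u_{\max}|\langle x,x_0\rangle|$. Second, for the curvature pinching you propose a test function of the form $\log(b_{ij}\xi^i\xi^j)-A\log\psi^{1/(1+k\alpha)}+B|\nabla u|^2-Cu$; the paper instead runs the maximum principle on $h^{11}/u$ together with the inverse-concavity inequality of Lemma \ref{lem-inv-con}, absorbing $2\psi_1F_1$ by Young's inequality so that the condition $\nabla^2\psi^{1/(1+k\alpha)}+\psi^{1/(1+k\alpha)}g>0$ closes the estimate. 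Your test function could plausibly be made to work, and you have correctly identified the role of the convexity hypothesis on $\psi^{1/(1+k\alpha)}$ as the analytic surrogate for the constant rank theorem, but the paper's computation is the one that actually demonstrates the exponent matching, and in any case this step only becomes available once the $C^0$ lower bound is in hand.
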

\begin{rem}
 Ding-Li \cite{DL22} considered the flow \eqref{flow-s} for cases $\a\leq\frac{1}{k}$ or $k=n$.
\end{rem}
The homothetic self-similar solution of the normalised flow \eqref{s1:flow-n} is exactly \eqref{lp cm}, thus we have the following corollary.

\begin{cor}\cite{GX18}\label{cor1}
Let $1\leq k\leq n-1$ and $1<p<k+1$. For any positive even smooth function $\psi$ satisfying $\nabla^2 \tilde{\psi}^{-\frac{1}{k+p-1}}+\tilde{\psi}^{-\frac{1}{k+p-1}}g>0$, \eqref{lp cm} has a unique smooth strictly convex and origin-symmetric solution.
\end{cor}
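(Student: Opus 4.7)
The strategy is to deduce Corollary~\ref{cor1} from Theorem~\ref{main-thm-1} by matching conventions and reading off the self-similar equation. Given the data $(p,\psi)$ of the corollary with $1 < p < k+1$, set $\alpha = 1/(p-1)$, so that $\alpha > 1/k$, and define $\tilde\psi = \psi^{-1/\alpha}$. Because $\alpha(k+p-1) = 1 + k\alpha$, one has $\tilde\psi^{-1/(k+p-1)} = \psi^{1/(1+k\alpha)}$, so the positive-definiteness hypothesis on $\tilde\psi$ in the corollary coincides exactly with that on $\psi$ required by Theorem~\ref{main-thm-1}.

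Now take $M_0$ to be any smooth uniformly convex origin-symmetric hypersurface (for instance the round sphere), and apply Theorem~\ref{main-thm-1} to the normalised flow \eqref{s1:flow-n}. This yields global existence and $C^\infty$ convergence to a smooth strictly convex limit $M_\infty$ with support function $u_\infty$. Origin-symmetry is preserved along the flow since $\psi$ is even and \eqref{u flow} is invariant under $x\mapsto -x$, so $u_\infty$ is even. Because $u_\infty$ is a stationary solution of \eqref{u flow},
\begin{equation*}
\psi(x)\,\sigma_k\bigl(\nabla^2 u_\infty + u_\infty I\bigr)^\alpha = \eta_\infty\,u_\infty(x), \qquad \eta_\infty > 0.
\end{equation*}
Raising to the power $1/\alpha = p-1$ and using $\psi^{1/\alpha}=\tilde\psi^{-1}$ yields \eqref{lp cm} with $c = \eta_\infty^{1/\alpha}$; smoothness and strict convexity of $u_\infty$ are inherited from the convergence.

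Uniqueness within the class of smooth strictly convex even solutions is standard for $p>1$: using the homogeneity of $\sigma_k$ one absorbs the constant by the rescaling $u_2 \mapsto \mu u_2$ with $\mu^{k+1-p}=c_1/c_2$ (well-defined since $k+1-p>0$), reducing to $c_1=c_2$. Comparing two solutions $u_1, u_2$ at the maximum $\Lambda$ and minimum $\lambda$ of $u_1/u_2$ and applying the monotonicity of $\sigma_k$ on positive cones gives $\lambda^{k+1-p}\le 1\le \Lambda^{k+1-p}$; the equality cases together with the linearised strong maximum principle force $u_1\equiv \Lambda u_2\equiv u_2$, and the exponent $1-p<0$ enters through this comparison. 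The genuine content of the corollary lies in Theorem~\ref{main-thm-1}; once that is available, the remaining steps — matching exponents, reading off the stationary equation, and invoking the standard uniqueness argument — are essentially bookkeeping. Consequently, the main obstacle is entirely the long-time existence and convergence of the flow, which Theorem~\ref{main-thm-1} absorbs.
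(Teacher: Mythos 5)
Your existence argument follows the same route as the paper: Theorem~\ref{main-thm-1} already asserts $C^\infty$ convergence of the rescaled flow to a smooth, strictly convex, origin-symmetric solution of \eqref{lp cm}, and the exponent bookkeeping $\a = 1/(p-1) > 1/k$, $\a(k+p-1)=1+k\a$, $\tilde\psi^{-1/(k+p-1)}=\psi^{1/(1+k\a)}$ correctly translates the hypothesis of the corollary into that of the theorem. (A minor slip: the given datum in the corollary is $\tilde\psi$, so you should define $\psi=\tilde\psi^{-\a}$ rather than the reverse; this is the same relation, just stated with the dependent and independent quantities swapped.)

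The uniqueness step, however, has a genuine gap. Your maximum/minimum comparison gives, at the max $\Lambda$ of $u_1/u_2$, the inequality $\Lambda^{p-1}\le\Lambda^k$, and at the min $\lambda$ the inequality $\lambda^{p-1}\ge\lambda^k$. Because $p-1<k$ in the present range $1<p<k+1$, these read $\Lambda\ge 1$ and $\lambda\le 1$, which is consistent with \emph{any} pair of solutions and produces no contradiction; there is no forced equality case, so no strong maximum principle can be invoked. The scheme you describe closes only when $p\ge k+1$, where the exponent $p-1-k$ has the opposite sign and one gets $\Lambda\le 1\le\lambda$, forcing $u_1\equiv u_2$. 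For $1<p<k+1$ a different argument is required, and the paper simply invokes the uniqueness theorem of Hu--Ma--Shen~\cite{HMS04}, which covers all $p>1$ and is proved by an integral (Aleksandrov--Fenchel type) method rather than a pointwise comparison. To repair your proposal, replace the max/min sketch with a citation to that result (or reproduce its integral argument); as written, the uniqueness claim is not established.
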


    \begin{rem}
    Theorem \ref{main-thm-1} provides a new proof to Guan-Xia's existence results of the $L^p$ Christoffel-Minkowski problem for $1<p<k+1$ without using the constant rank theorem.
    \end{rem}
    \begin{rem}
    By using the same argument in \cite{BIS21}, Corollary \ref{cor1} implies the existence results in the weakly convex assumption, i.e. $\nabla^2 \tilde{\psi}^{-\frac{1}{k+p-1}}+\tilde{\psi}^{-\frac{1}{k+p-1}}g \geq 0$.
    \end{rem}

Incidentally, when $\psi\equiv 1$, by using Chow-Gulliver's gradient estimate (see \cites{CG07, Ger14}), Gerhardt obtained the following results 

\begin{thmA}\cite{Ger14}
	Let $M_0$ be a smooth, closed, uniformly convex hypersurface in $\mathbb{R}^{n+1}$ enclosing the origin. If $\psi\equiv 1$, $\a>\frac{1}{k}$, flow \eqref{flow-s} has a unique smooth, uniformly convex solution $M_t$ for the finite maximal time $t\in [0,T)$. After a proper normalisation, $\tilde{M_t}$ converge smoothly to a sphere.
\end{thmA}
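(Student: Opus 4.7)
The plan is to follow the standard strategy for normalised expanding flows, adapting it to the specific setting of Theorem A where $\psi\equiv 1$ and $\alpha>\frac1k$. Because the flow $\partial_t X=\sigma_k(\lambda)^{\a}\nu$ is parabolic on the cone of uniformly convex hypersurfaces (writing it as a scalar equation $\partial_t u=\sigma_k(\nabla^2u+uI)^{\a}$, whose linearisation is the positive, concave operator $\sigma_k^{\a}$ acting on $b_{ij}$), short-time existence and uniqueness of a uniformly convex solution follow from classical parabolic theory. To see that the maximal time $T^*$ is finite when $\a>\frac{1}{k}$, one compares with concentric spheres: a sphere of radius $r$ evolves by $\dot r=\binom{n}{k}^{\a}r^{k\a}$, and since $k\a>1$ this ODE blows up in finite time. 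Enclosing $M_0$ between two spheres and invoking the avoidance principle for the expanding flow yields $T^*<\infty$.

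Next, I would introduce the normalised flow exactly as in \eqref{s1:flow-n} with $\psi\equiv 1$, so that $V_{k+1}(u,\dots,u)=|S^n|/(n+1)$ is preserved, and work with the scalar equation $\partial_tu=\sigma_k^{\a}-u\eta(t)$ where $\eta(t)=\fint_{\mathbb S^n}\sigma_k^{1+\a}d\mu$. The first priority is the $C^0$ bound on $u$. At a spatial maximum of $u$, one has $\nabla^2 u\le 0$, so $\sigma_k(\nabla^2u+uI)\le\binom{n}{k}u^k$, and combined with the normalisation \eqref{normalised volume} the maximum principle yields a uniform upper bound $u\le C_2$. The harder direction is the lower bound; here I would invoke the Chow–Gulliver gradient estimate of \cite{CG07,Ger14}, applied to an appropriate scale-invariant function of $u$ and $|\nabla u|$, which on convex hypersurfaces controls $\max u/\min u$ uniformly in time. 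Combined with the upper bound and volume normalisation, this produces $0<C_1\le u\le C_2<\infty$.

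With $u$ pinched, I would then derive uniform two-sided bounds on the principal radii $\lambda_i$ (equivalently, on the eigenvalues of $b_{ij}=u_{ij}+u\delta_{ij}$). For the upper bound I would apply the maximum principle to an auxiliary function of the form $W=\log\lambda_{\max}-A\log u+B|X|^2$ (or to $\tr b$), using the concavity of $\sigma_k^{1/k}$ to absorb the bad gradient terms; since $\psi\equiv 1$, no inhomogeneous spherical Hessian terms appear, which simplifies the computation considerably. For the lower bound on $\sigma_k^{\a}$—equivalently, for preservation of uniform convexity—I would argue via the maximum principle applied to $\log\sigma_k^{\a}-\beta\log u+\gamma\eta(t)\cdot t$, using the homogeneity of $\sigma_k$ and the $C^0$ bounds to close the estimate. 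Once $b_{ij}$ is pinched, Krylov–Safonov plus Schauder estimates upgrade the solution to uniform $C^\infty$ bounds, and the normalised flow exists for all time.

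Finally, for convergence to a sphere, I would exploit a monotone scale-invariant functional along the normalised flow. Differentiating the $(k{+}1)$-th quermassintegral-type entropy $\mathcal E(u)=\int_{\mathbb S^n}u\,\sigma_k\,d\mu$ (which is conserved) against a dual quantity such as $\int_{\mathbb S^n}\sigma_k^{1+\a}d\mu$, one obtains (by Hölder or the Aleksandrov–Fenchel inequality) a monotone quantity whose stationary points correspond to $u\sigma_k\equiv\text{const}$, i.e.\ to spheres centred at the origin of $\mathbb R^{n+1}$ after translation. Equicontinuity from the uniform $C^\infty$ bounds gives subsequential smooth convergence of $\tilde M_t$; the monotone quantity and the uniqueness of the round soliton (up to translation) for the Christoffel–Minkowski self-similar equation with $\tilde\psi\equiv 1$ identify the full limit as a sphere, and standard stability arguments promote subsequential to full smooth convergence.

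The main obstacle I expect is the lower bound on $\sigma_k$, i.e.\ the preservation of uniform convexity in the normalised flow: the $C^0$ pinching from Chow–Gulliver is essential input, but closing the maximum-principle argument for $\log\sigma_k^{\a}$ requires using the homogeneity and concavity of $\sigma_k$ together with the sign of the $-u\eta(t)$ normalisation term, and it is precisely here that the restriction $\a>\frac1k$ plays its role by giving the correct sign to the dominant term in the evolution of the auxiliary function.
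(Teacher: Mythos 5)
Your outline follows the route the paper itself attributes to Gerhardt (Theorem A is only cited here, not reproved: the paper's own machinery, namely the monotone functional $\mathcal J$ together with origin-symmetry and the refined gradient estimate of Lemma \ref{c1}, is built for the anisotropic Theorem \ref{main-thm-1} and is not available for Theorem A, which has no symmetry assumption). So invoking the Chow--Gulliver/Aleksandrov-reflection gradient estimate is indeed the right key input. However, there is a genuine gap in your $C^0$ step: the claim that the maximum principle applied to the normalised scalar equation yields $u\le C_2$ is exactly the step that fails in the regime $\alpha>\tfrac1k$, and the paper says so explicitly (``the scalar equation of the flow \eqref{u flow} no longer implies $C^0$ estimates by the maximum principle directly''). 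At a spatial maximum one only gets $\frac{d}{dt}u_{\max}\le \binom{n}{k}^{\alpha}u_{\max}^{k\alpha}-\eta(t)\,u_{\max}$, and since $k\alpha>1$ while $\eta(t)$ is merely bounded (it is non-increasing), this differential inequality does not close: the superlinear term dominates once $u_{\max}$ is large, so no a priori upper bound follows. The correct argument (Gerhardt's) derives \emph{both} bounds from the pinching: the Chow--Gulliver estimate controls the ratio of circumradius to inradius, and the conserved quantity \eqref{normalised volume}, which is (up to a constant) a quermassintegral and hence monotone under inclusion of convex bodies, then fixes the scale, giving $0<C_1\le u\le C_2$. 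Your write-up has these ingredients but mis-assigns the upper bound to the maximum principle and uses that (unavailable) upper bound as input for the lower bound, so the logical chain as written is broken, even though it is repairable.

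Two smaller points. First, your identification of the stationary equation is off: equality in the monotone quantity (here $\int_{\mathbb S^n}u^{1+1/\alpha}d\mu$, the $\psi\equiv1$ case of \eqref{Ju}) characterises $\sigma_k(\nabla^2u+uI)^{\alpha}=c\,u$, i.e. $u^{1-p}\sigma_k=c$ with $p=1+\tfrac1\alpha$, not $u\,\sigma_k\equiv\mathrm{const}$; and for $p>1$ this self-similar equation has no translation invariance, so the limit is an origin-centred sphere by the relevant uniqueness result rather than ``up to translation''. Second, the curvature bounds you sketch (concavity of $\sigma_k^{1/k}$, inverse-concavity in the spirit of Lemma \ref{lem-inv-con}, speed bounds by maximum principle) are consistent with both Gerhardt's proof and Section \ref{sec:4} of this paper, and are unproblematic once the two-sided $C^0$ bound is correctly established.
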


The flow \eqref{flow-s} for $1\leq k\leq  n-1$ and $\a> \frac{1}{k}$ ($1<p<k+1$) we study here is more difficult than the cases $\a\leq \frac{1}{k}$ ($p\geq k+1$) or $k=n$ for the following reasons. First, the scalar equation of the flow \eqref{u flow} no longer implies $C^0$ estimates by the maximum principle directly (see \cites{Iva19,SY20}). Second, there is a lack of the uniform lower bounds of vol($\Omega_t$) under the flow \eqref{s1:flow-n}. We choose a different flow from \cites{Iva19,SY20} and  concerns the $L^p$ Christoffel-Minkowski problem for a larger range of $p$. Motivated by Guan-Xia's refined gradient estimate in \cite{GX18}, we obtain a delicate gradient estimate of the flow \eqref{s1:flow-n}. We apply a new method to obtain the uniform lower bound of the support function $u$. We find that using the same technique we can also improve the results of Ivaki \cite{Iva19} to $p>2$ under the evenness assumption. Hence the gradient estimate made in our paper also partially answer the first question in \cite{Iva19}.

This paper is organized as follows. In section \ref{sec:2}, we collect some properties of convex body, show the monotone quantity and derive the evolution equations of geometric quantities and the bounds of $\eta(t)$ along the flow \eqref{s1:flow-n}. In section \ref{sec:3}, we obtain the refined gradient estimate and the uniform bounds of the support function $u$ for the normalised flow \eqref{s1:flow-n}. In section \ref{sec:4}, we derive the uniform bounds of principal curvatures and the long time existence of the flow \eqref{s1:flow-n}. In section \ref{sec:5}, we complete the proof of Theorem \ref{main-thm-1}.

\section{Preliminaries}\label{sec:2}
For convenience, we denote $\eta(t)=\int_{\mathbb{S}^n}\hspace{-1.55em}-\ \ \psi \s_{k} \f \lambda \r  ^{1+\a} d\mu_{S^n}$ and $F=\s_k^{\a}$. Then \eqref{u flow} becomes 
\begin{align}\label{u1:flow-n}
\partial_tu =\psi F -\eta(t) u.
\end{align}
We also name the speed along the normal direction $\psi \s_k^{\a}$ as $\Phi$ in this paper. Now using this scalar equation, we have the following lemma, 
\begin{lem}\label{mon dec}
	Under the normalised flow \eqref{s1:flow-n}, we have
	\begin{align}\label{Ju}
 \mathcal{J}(u)=\int_{\mathbb{S}^n}\psi^{-\frac{1}{\a}}u^{1+\frac{1}{\a}}d\mu_{S^n}
\end{align}
is non-increasing and the equality holds if and only if $\mathcal{M}_t$ satisfies the elliptic equation
\begin{equation}\label{ellip}
\psi u^{-1}\s_k^{\a}\equiv C.
\end{equation} 
\end{lem}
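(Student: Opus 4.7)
The plan is to differentiate $\mathcal{J}(u)$ directly along the flow and reduce the monotonicity to two applications of H\"older's inequality. Substituting the scalar equation $\partial_t u = \psi\s_k^{\a} - \eta(t)u$ into the derivative of \eqref{Ju} yields
\begin{align*}
\frac{d}{dt}\mathcal{J}(u) = \f 1+\frac{1}{\a}\r\int_{\mathbb{S}^n}\psi^{-1/\a}u^{1/\a}\f \psi\s_k^{\a} - \eta u\r d\mu_{S^n} = \f 1+\frac{1}{\a}\r\bigl[\,A - \eta\,\mathcal{J}(u)\,\bigr],
\end{align*}
where $A := \int_{\mathbb{S}^n}\psi^{1-1/\a}u^{1/\a}\s_k^{\a}\, d\mu_{S^n}$. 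Hence it suffices to prove the pointwise-flavoured inequality $A \leq \eta\,\mathcal{J}(u)$, with equality characterising \eqref{ellip}.

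The key observation is that both $u\s_k$ and $\psi^{1-1/\a}u^{1/\a}\s_k^{\a}$ factor cleanly in terms of $f := \psi^{-1/\a}u^{1+1/\a}$ (the integrand of $\mathcal{J}$) and $g := \psi\s_k^{1+\a}$ (the integrand of $|S^n|\,\eta$). Matching powers of $\psi$, $u$ and $\s_k$ gives
\begin{align*}
u\s_k = f^{\a/(1+\a)}g^{1/(1+\a)}, \qquad \psi^{1-1/\a}u^{1/\a}\s_k^{\a} = f^{1/(1+\a)}g^{\a/(1+\a)}.
\end{align*}
Applying H\"older's inequality with conjugate exponents $\tfrac{1+\a}{\a}$ and $1+\a$ to each of these two representations and multiplying the resulting estimates yields
\begin{align*}
\f \int_{\mathbb{S}^n}u\s_k\, d\mu_{S^n}\r \cdot A \;\leq\; \f \int_{\mathbb{S}^n}f\, d\mu_{S^n}\r\f \int_{\mathbb{S}^n}g\, d\mu_{S^n}\r = \mathcal{J}(u)\cdot |S^n|\,\eta(t).
\end{align*}
The normalisation \eqref{normalised volume} supplies $\int_{\mathbb{S}^n}u\s_k\, d\mu_{S^n} = |S^n|$, which cancels and leaves $A\leq \eta\,\mathcal{J}(u)$, i.e.\ $\frac{d}{dt}\mathcal{J}(u)\leq 0$.

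For the rigidity claim, both H\"older inequalities saturate simultaneously exactly when $f$ and $g$ are pointwise proportional on $\mathbb{S}^n$, i.e.\ $\psi^{-1/\a}u^{1+1/\a} = c\,\psi\s_k^{1+\a}$; raising to the power $\a/(1+\a)$ and rearranging produces $\psi u^{-1}\s_k^{\a}\equiv C$, which is \eqref{ellip}. I do not expect a genuine obstacle here: the real content is recognising the decomposition $f,g$, after which the H\"older exponents $\a/(1+\a)$ and $1/(1+\a)$ are forced by dimensional matching in $\psi$, $u$, $\s_k$, and the role of the normalisation \eqref{normalised volume} is precisely to make the two-H\"older bound saturate exactly on the homothetic self-similar profile.
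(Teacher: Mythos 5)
Your proof is correct, and the derivative computation and equality characterization match the paper's exactly; the difference lies in how the crucial integral inequality is established. The paper substitutes $\rho=\psi\s_k^{\a}/u$ and $d\sigma=u\s_k\,d\mu$ and then quotes Andrews' generalized H\"older inequality from \cite{And98} to conclude $\int\rho^{1-1/\a}d\sigma\cdot\int d\sigma\leq\int\rho\,d\sigma\cdot\int\rho^{-1/\a}d\sigma$, with equality iff $\rho\equiv c$. You instead prove precisely this inequality from scratch: your pair $f=\psi^{-1/\a}u^{1+1/\a}$ and $g=\psi\s_k^{1+\a}$ are exactly $\rho^{-1/\a}d\sigma/d\mu$ and $\rho\,d\sigma/d\mu$, and the two factorizations $u\s_k=f^{\a/(1+\a)}g^{1/(1+\a)}$, $\psi^{1-1/\a}u^{1/\a}\s_k^{\a}=f^{1/(1+\a)}g^{\a/(1+\a)}$ encode the log-convexity of $t\mapsto\int\rho^{t}d\sigma$, which your two standard H\"older applications plus multiplication establish directly. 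The net gain is that your argument is self-contained and elementary, at the cost of having to spot the right exponents; the paper's version is shorter on the page but outsources the interpolation inequality to a citation. One small point worth spelling out in a final write-up: to get rigidity you need $A=\eta\,\mathcal{J}$ to force \emph{both} H\"older inequalities to saturate, which does follow since $|S^n|A=(\int u\s_k)A\leq(\int f)^{\a/(1+\a)}(\int g)^{1/(1+\a)}A\leq(\int f)(\int g)=|S^n|\eta\,\mathcal{J}$ pins down each step once the outer quantities agree, and a single saturation already gives $f\propto g$.
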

\begin{proof}
\begin{equation}\label{mon dec 2}
\begin{aligned}
\partial_t\mathcal{J}(u)=&(1+\frac{1}{\a})\int_{\mathbb{S}^n}\psi^{-\frac{1}{\a}}u^{\frac{1}{\a}}\partial_t u d\mu_{S^n}\\
=&(1+\frac{1}{\a})\int_{\mathbb{S}^n}\psi^{1-\frac{1}{\a}}u^{\frac{1}{\a}}\s_k^{\a} d\mu_{S^n}-\frac{\int_{\mathbb{S}^n} \psi \s_{k} \f \lambda \r  ^{1+\a} d\mu_{S^n}}{|S^n|}(1+\frac{1}{\a})\int_{\mathbb{S}^n}\psi^{-\frac{1}{\a}}u^{1+\frac{1}{\a}} d\mu_{S^n}\\
\end{aligned}
\end{equation}
Now let $\rho=\frac{\psi \s_k^{\a}}{u}$ and $d\sigma=u\s_kd\mu_{S^n}$. Plugging \eqref{normalised volume} into \eqref{mon dec 2}, we have

\begin{equation}\label{mon dec 3}
\begin{aligned}
\partial_t\mathcal{J}(u)
=(1+\frac{1}{\a})\int_{\mathbb{S}^n}\rho^{1-\frac{1}{\a}}d\sigma-(1+\frac{1}{\a})\frac{\int_{\mathbb{S}^n} \rho d\sigma}{\int_{\mathbb{S}^n}d\sigma}\int_{\mathbb{S}^n}\rho^{-\frac{1}{\a}}d\sigma\leq 0,
\end{aligned}
\end{equation}
where we use Andrew's generalized H$\ddot{\text{o}}$lder inequality (see \cite{And98}) and the identity obtained if and only if $\rho\equiv c$.
\end{proof}
\begin{cor}\label{upp}
Let $M_t$, $t\in[0,T)$, be an smooth, uniformly convex and origin-symmetric solution to the flow \eqref{s1:flow-n}. Assume that $1\leq k\leq n$, $\a>\frac{1}{k}$ and $\psi$ is an even positive function on $\mathbb{S}^n$. Then the support function of it has an upper bound, i.e., there exists a constant $C$ only depends on $\max\psi$ and the initial hypersurface, such that 
\begin{align*}
u\leq C.
\end{align*}
\end{cor}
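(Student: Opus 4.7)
The plan is to combine the monotonicity of $\mathcal{J}$ from Lemma \ref{mon dec} with a geometric inequality for origin-symmetric convex bodies that controls the pointwise maximum of the support function by the integral $\int u^{1+1/\alpha}$. First I would note that, since $\psi$ is even and $X_0$ is origin-symmetric, the reflection $x\mapsto -x$ sends a solution of \eqref{u flow} to another solution with the same initial data, so by uniqueness the solution $u(\cdot,t)$ remains even (equivalently, the hypersurface stays origin-symmetric) for every $t\in[0,T)$. Lemma \ref{mon dec} then provides the time-independent bound
\[
\mathcal{J}(u(\cdot,t))=\int_{\mathbb{S}^n}\psi^{-1/\alpha}u^{1+1/\alpha}\,d\mu_{S^n}\leq \mathcal{J}(u_0).
\]

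The key geometric step is to prove that, for the support function of any origin-symmetric convex body,
\[
\int_{\mathbb{S}^n} u^{1+1/\alpha}\,d\mu_{S^n}\geq c_{n,\alpha}\,u_{\max}^{1+1/\alpha},\qquad u_{\max}:=\max_{\mathbb{S}^n} u,
\]
where $c_{n,\alpha}>0$ is a dimensional constant. To see this, let $u_{\max}=u(x_0)$. By the definition of the support function there is a point $P$ on the convex body with $\langle P,x_0\rangle = u_{\max}$, so $|P|\geq u_{\max}$. Origin-symmetry yields $-P$ on the body as well, and hence
\[
u(x)\geq \max\{\langle P,x\rangle,-\langle P,x\rangle\}=|\langle P,x\rangle|\qquad\forall x\in\mathbb{S}^n.
\]
Raising to the $(1+1/\alpha)$-th power, integrating, and using rotation invariance to write the result as $|P|^{1+1/\alpha}\cdot c_{n,\alpha}$ with $c_{n,\alpha}=\int_{\mathbb{S}^n}|\langle e,x\rangle|^{1+1/\alpha}\,d\mu_{S^n}$, then using $|P|\geq u_{\max}$, proves the inequality.

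Combining these two ingredients with the elementary bound $\psi^{-1/\alpha}\geq (\max\psi)^{-1/\alpha}$ gives
\[
(\max\psi)^{-1/\alpha}\,c_{n,\alpha}\,u_{\max}(t)^{1+1/\alpha}\leq \mathcal{J}(u(\cdot,t))\leq \mathcal{J}(u_0),
\]
which yields a uniform upper bound on $u$ in terms of $\max\psi$ and quantities determined by the initial hypersurface (through $\mathcal J(u_0)$). The main conceptual obstacle is the geometric inequality in the second paragraph: without origin-symmetry, $u$ could be nearly zero on most of $\mathbb{S}^n$ while achieving a large maximum near a single point, so no pointwise lower bound by $u_{\max}|\langle\cdot,x_0\rangle|$ would be available and the integral would fail to control the sup. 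This is precisely why the evenness hypothesis appears in the statement, and reflects the fact, noted in the introduction, that for $\alpha>1/k$ the $C^0$ estimate cannot be obtained by a direct maximum-principle argument applied to \eqref{u1:flow-n}.
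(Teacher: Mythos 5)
Your proof is correct and follows essentially the same route as the paper: monotonicity of $\mathcal{J}$ from Lemma~\ref{mon dec}, combined with origin-symmetry to deduce the pointwise lower bound on $u$ from a contained segment through the origin, then integration. The only cosmetic difference is that the paper writes $u(x)\geq u_{\max}|\langle x,x_0\rangle|$ directly (valid because at the maximizing direction $x_0$ the touching point is $P=u_{\max}x_0$), whereas you work with $u(x)\geq|\langle P,x\rangle|$, $|P|\geq u_{\max}$, and rotation invariance of the integral; these are interchangeable.
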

\begin{proof}
At a fixed time t, assume that u attains it maximum at point $x_0$, i.e. $u_{\max}(t)=u(x_0,t)$. we have $u(x,t)\geq u_{\max}(t)\mid\langle x,x_0\rangle\mid,\forall x\in \mathbb{S}^n$, since u is a even function. Thus,
\begin{align*}
(\max\psi)^{-\frac{1}{\a}}\f u_{\max}(t)\r^{1+\frac{1}{\a}}\int_{\mathbb{S}^n}\mid\langle x,x_0\rangle\mid^{1+\frac{1}{\a}}d\mu_{S^n}\leq\int_{\mathbb{S}^n}\psi^{-\frac{1}{\a}}u^{1+\frac{1}{\a}}d\mu_{S^n}\leq C(M_0).
\end{align*}
where the last inequality comes from Lemma $\ref{mon dec}$. 
Then we obtain $u\leq C$.
\end{proof}

Now we introduce some basic notations in convex geometry. For a function $u\in C^2(\mathbb{S}^n)$, we denote $W_u=\nabla^2u+ug$. Notice that if $u$ is the support function of a strictly convex hypersurface, then $W_u$ is the principal curvature radii of it which is positive-definite. For $\forall A_i\in \mathcal{M}_n$ (the symmetric $n\times n$ matrices), we set
\begin{equation}
\s_k(A_1,A_2,\cdots,A_n)=\frac{1}{k!}\sum^n_{i_1,\cdots,i_n=1\atop j_1,\cdots,j_n=1}\delta^{i_1,i_2,\cdots,i_n}_{j_1,j_2,\cdots,j_n}A_{1_{i_1 j_1}}\cdots A_{k_{i_k j_k}}
\end{equation}
Let $u_i\in C^2(\mathbb{S}^n)$, $i=1,2...,n+1$. Define
\begin{equation*}
V(u_1,u_2,\cdots,u_{n+1}):=\int_{\mathbb{S}^n}u_1\s_n(W_{u_2},\cdots,W_{u_{n+1}})d\mu_{\mathbb{S}^n}
\end{equation*}
\begin{equation*}
V_k(u_1,u_2,\cdots,u_k):=V(u_1,u_2,\cdots,u_k,1,\cdots,1)
\end{equation*}
%and the $k$-convex cone $\Gamma_k$ as 
%\begin{equation}
%\Gamma_k
%\end{equation}
From \cite{GMTZ10}, we have the following properties,
\begin{lem}\cite{GMTZ10}
$V(u_1,u_2,\cdots,u_{n+1})$ is a symmetric multilinear form on $(C^2(\mathbb{S}^n))^{n+1}$. Especially,
\begin{equation*}
V_{k+1}(u,\cdots,u)=V_{k+2}(1,u,\cdots,u)
\end{equation*}
and the Minkowski integral formula holds:
\begin{equation}\label{mink formula}
\int_{\mathbb{S}^n}u\s_k(W_u)d\mu_{S^n}=\frac{k+1}{n-k}\int_{\mathbb{S}^n} \s_{k+1}(W_u)d\mu_{S^n}.
\end{equation}
\end{lem}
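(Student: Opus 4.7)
The plan is to prove, in sequence: (a) multilinearity, (b) full symmetry, which is the substantive content, (c) the specialization $V_{k+1}(u,\ldots,u)=V_{k+2}(1,u,\ldots,u)$, a direct consequence of (b), and (d) the Minkowski integral formula, a polarization computation. Multilinearity is automatic since the mixed discriminant $\sigma_n(A_1,\ldots,A_n)$ is by construction a symmetric multilinear form (the polarization of the determinant) and $u\mapsto W_u=\nabla^2u+ug$ is linear; in particular this immediately gives symmetry of $V$ in the last $n$ slots, so the real work is showing that swapping $u_1$ with $u_2$ leaves $V$ unchanged.

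For that swap I would integrate by parts on $\mathbb{S}^n$. Decompose
\[
\sigma_n(W_{u_2},\ldots,W_{u_{n+1}})=C^{ij}(W_{u_2})_{ij},
\]
where $C^{ij}=C^{ij}(W_{u_3},\ldots,W_{u_{n+1}})$ is the mixed cofactor tensor, symmetric in $i,j$ and obtained by polarization in the remaining $n-1$ matrix slots. The decisive input is the divergence-free identity $\nabla_j C^{ij}=0$ on $\mathbb{S}^n$, which for a pure Newton tensor $\sigma_k^{ij}(W_u)$ is a standard consequence of the sphere commutation $u_{ijl}-u_{ilj}=u_j\delta_{il}-u_l\delta_{ij}$, and extends to the mixed case by linearity in each matrix slot. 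Expanding $(W_{u_2})_{ij}=(u_2)_{ij}+u_2\delta_{ij}$ and integrating by parts twice, discarding the $\nabla_j C^{ij}$ terms at each step, exchanges $u_1$ and $u_2$ inside the integral and gives $V(u_1,u_2,u_3,\ldots,u_{n+1})=V(u_2,u_1,u_3,\ldots,u_{n+1})$. Combined with the symmetry in the last $n$ slots, $V$ is fully symmetric.

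With symmetry in hand, (c) is immediate: both $V_{k+1}(u,\ldots,u)$ and $V_{k+2}(1,u,\ldots,u)$ unfold, by the definition of $V_j$, to $V$ evaluated on the same multiset of arguments, namely $k+1$ copies of $u$ and $n-k$ copies of $1$. For the Minkowski formula I would invoke the polarization identity
\[
\sigma_n(\underbrace{A,\ldots,A}_{j},\underbrace{I,\ldots,I}_{n-j})=\binom{n}{j}^{-1}\sigma_j(A),
\]
obtained by matching the two expansions of $\det(A+tI)$. Specializing to $A=W_u$ yields $V_{k+1}(u,\ldots,u)=\binom{n}{k}^{-1}\int_{\mathbb{S}^n}u\,\sigma_k(W_u)\,d\mu_{\mathbb{S}^n}$ and $V_{k+2}(1,u,\ldots,u)=\binom{n}{k+1}^{-1}\int_{\mathbb{S}^n}\sigma_{k+1}(W_u)\,d\mu_{\mathbb{S}^n}$, and the ratio $\binom{n}{k+1}^{-1}/\binom{n}{k}^{-1}=(k+1)/(n-k)$ produces exactly the stated constant.

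The step I expect to be the main obstacle is the divergence-free identity $\nabla_j C^{ij}=0$ for the mixed Newton tensor on $\mathbb{S}^n$: the rest of the argument is mechanical bookkeeping with polarization and binomial coefficients, but this spherical-calculus identity, already nontrivial for a single matrix through the Codazzi-type sphere commutator, must be extended carefully via polarization before the integration-by-parts step closes.
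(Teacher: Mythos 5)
The paper does not prove this lemma; it merely cites it from Guan--Ma--Trudinger--Zhu, so there is no internal proof to compare against. Your argument is correct and is the standard one: multilinearity and symmetry in the last $n$ slots come for free from the mixed discriminant, the swap of the first two slots reduces to $\nabla_j C^{ij}=0$ (which, as you say, follows from the Codazzi symmetry $\nabla_k(W_u)_{ij}=\nabla_j(W_u)_{ik}$ on $\mathbb{S}^n$, extended to the mixed Newton tensor by polarization) together with two integrations by parts, and then (c) and the Minkowski formula follow from the partial-trace identity $\sigma_n(A^{(j)},I^{(n-j)})=\binom{n}{j}^{-1}\sigma_j(A)$ and the binomial ratio $\binom{n}{k}/\binom{n}{k+1}=(k+1)/(n-k)$. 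One small caveat: the paper's displayed definition of $\sigma_k(A_1,\dots,A_n)$ is garbled (the index count does not match the argument count); with the standard normalization $\sigma_n(A_1,\dots,A_n)=\tfrac{1}{n!}\sum\delta^{i_1\cdots i_n}_{j_1\cdots j_n}(A_1)_{i_1j_1}\cdots(A_n)_{i_nj_n}$ your constants are exactly right.
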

The Garding's cone $\Gamma_k$ is given by
\begin{equation*}
\Gamma_k=\lbrace A\in\mathcal{M}_n:\s_i(A)>0 \text{ for } i=1,\cdots,k\rbrace
\end{equation*}
We also have the following Aleksandrov-Fenchel inequality 
\begin{lem}\cite{GMTZ10}
Let $u_i\in C^2(\mathbb{S}^n),i=1,2,\cdots,k$ be such that $u_i>0$ and $W_{u_i}\in\Gamma_k$. Then for any $v\in C^2(\mathbb{S}^n)$, we have
\begin{equation}\label{AF ineq}
V_{k+1}(v,u_1,\cdots,u_k)^2\geq V_{k+1}(v,v,u_2,\cdots,u_k)V_{k+1}(u_1,u_1,u_2,\cdots,u_k).
\end{equation}
The equality holds if and only if $v=au_1+\sum_{l=1}^{n+1}a_lx_l$ for some constants $a,a_1,\cdots,a_{n+1}$.
Especially, there exists a sharp constant $C_{n,k}$ such that,
\begin{equation}\label{mink ineq}
\f\int_{\mathbb{S}^n} \s_{k+1}(\lambda)d\mu_{S^n}\r^{\frac{1}{k+1}}\leq C_{n,k}\f\int_{\mathbb{S}^n} \s_k(\lambda)d\mu_{S^n}\r^{\frac{1}{k}}.
\end{equation}
\end{lem}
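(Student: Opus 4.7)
The plan is to prove \eqref{AF ineq} as a Lorentz-type reverse Cauchy–Schwarz inequality for the symmetric bilinear form $Q(v,w):=V_{k+1}(v,w,u_2,\ldots,u_k)$ on $C^2(\mathbb{S}^n)$, and then derive the Minkowski-type inequality \eqref{mink ineq} by specialization combined with the Minkowski formula \eqref{mink formula}.

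First I would fix $u_2,\ldots,u_k$ and observe that $Q$ is symmetric by multilinearity of $V_{k+1}$; expanding $\s_n(W_v,W_w,W_{u_2},\ldots,W_{u_k},I,\ldots,I)$ and integrating by parts, one rewrites $Q(v,w)=\int_{\mathbb{S}^n}v\,L[w]\,d\mu_{S^n}$ where $L$ is a self-adjoint second-order operator, elliptic because each $W_{u_i}\in\G_k$. The coordinate functions satisfy $W_{x_\ell}=\nabla^2 x_\ell+x_\ell g=0$, hence they lie in the kernel of $L$. The central analytic claim is that $Q(w,w)\leq 0$ for every $w$ satisfying the single linear constraint $Q(w,u_1)=0$, with equality if and only if $w\in\mathrm{span}\{x_1,\ldots,x_{n+1}\}$. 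Granting this, writing an arbitrary $v$ as $v=c\,u_1+w$ with $Q(w,u_1)=0$ and expanding $Q(v,v)=c^2Q(u_1,u_1)+Q(w,w)$, $Q(v,u_1)=c\,Q(u_1,u_1)$, yields \eqref{AF ineq} immediately, with the equality case giving exactly $v\in \R u_1\oplus\mathrm{span}\{x_1,\ldots,x_{n+1}\}$.

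The inequality \eqref{mink ineq} would then follow by iterating \eqref{AF ineq}. Setting $v=1$ and $u_1=\cdots=u_k=u$ in \eqref{AF ineq} gives $V_{k+1}(1,u,\ldots,u)^2\geq V_{k+1}(1,1,u,\ldots,u)\,V_{k+1}(u,u,\ldots,u)$; successively replacing one copy of $u$ by $1$ in the second factor and iterating yields a geometric-mean chain relating $V_{k+1}(u,\ldots,u)$ to $V_{k+1}(1,u,\ldots,u)$. The Minkowski formula \eqref{mink formula} then identifies $V_{k+1}(u,\ldots,u)$ with a multiple of $\int_{\mathbb{S}^n}\s_{k+1}(\lambda)\,d\mu_{S^n}$ and $V_{k+1}(1,u,\ldots,u)$ with a multiple of $\int_{\mathbb{S}^n}\s_k(\lambda)\,d\mu_{S^n}$, so the chain collapses to the power inequality \eqref{mink ineq} with an explicit sharp constant $C_{n,k}$ depending only on $n$, $k$, and $|S^n|$.

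The main obstacle is establishing the Lorentz signature of $Q$---namely, that $Q(w,w)\leq 0$ on the codimension-one subspace $\{w:Q(w,u_1)=0\}$. This is equivalent to saying that, modulo the $(n+1)$-dimensional kernel spanned by coordinate functions, the operator $L$ has exactly one positive eigenvalue and all other eigenvalues are non-positive. The standard routes are Alexandrov's induction on $k$, starting from $k=n-1$ (where the inequality reduces to a pointwise Cauchy–Schwarz for cofactor matrices) and stepping down via integration by parts inside $V_{k+1}$, or a Hilbert-style spectral argument combining a Krein–Rutman analysis of the principal eigenvalue of $L$ with the second variation of $V_{k+1}$ along a linear path. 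Both routes require delicate positivity estimates on the boundary of the Garding cone $\G_k$, and this spectral-gap input is the technical heart of the result.
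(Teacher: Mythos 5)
The paper does not prove this lemma; it is cited verbatim from Guan--Ma--Trudinger--Zhu \cite{GMTZ10}, so there is no ``paper's own proof'' to compare against. Judged on its own merits, your outline follows the classical Hilbert--Alexandrov spectral route, which is indeed the approach taken in \cite{GMTZ10} (adapted to the spherical setting of mixed discriminants of $W_{u_i}$). Your reduction is sound: once one knows that the symmetric form $Q(v,w)=V_{k+1}(v,w,u_2,\ldots,u_k)$ has the $(n+1)$-dimensional kernel $\mathrm{span}\{x_1,\ldots,x_{n+1}\}$ and is non-positive on $\{w: Q(w,u_1)=0\}$, writing $v=cu_1+w$ with $Q(w,u_1)=0$ and expanding does give \eqref{AF ineq}, provided one also records that $Q(u_1,u_1)>0$ (needed to pass from $-Q(w,w)Q(u_1,u_1)\geq 0$ to the stated inequality); you should state that positivity explicitly, though it is a standard consequence of $u_i>0$ and $W_{u_i}\in\Gamma_k$. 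The derivation of \eqref{mink ineq} is likewise correct: the chain $W_j^2\geq W_{j-1}W_{j+1}$ (with $W_j$ the mixed quantity having $j$ copies of $u$ and the rest equal to $1$) gives log-concavity, $W_0=|S^n|$, and the Minkowski formula \eqref{mink formula} converts the endpoints to $\int\sigma_k$ and $\int\sigma_{k+1}$, yielding the power inequality with an explicit constant.

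The genuine gap is the one you flag yourself: the Lorentz-signature (or ``one positive eigenvalue modulo kernel'') property of $Q$ is the entire content of the Aleksandrov--Fenchel inequality, and you do not prove it. Listing the two known strategies (Alexandrov's induction on the number of distinct bodies / Hilbert's eigenvalue argument for the linearized operator) is an accurate description of the literature, but it is a pointer rather than an argument. In particular, the ellipticity and self-adjointness of the operator $L$ with $Q(v,w)=\int v\,L[w]$, the identification of its kernel as precisely the linear functions, and the strict negativity of $Q$ off the line $\mathbb{R}u_1$ inside the orthogonal complement are each nontrivial and require the Garding-cone structure; without carrying out one of the two routes to completion, the proposal remains an outline. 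Since the original authors delegate the proof to \cite{GMTZ10}, a complete self-contained argument would need to reproduce the spectral analysis there or Alexandrov's inductive scheme.
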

  
\subsection{Evolution equations}
For a real symmetric matrix $A =\lbrace b_{ij}\rbrace_{n\times n}$, let $\lambda = (\lambda_1, \lambda_2, \cdots, \lambda_n)$ be the eigenvalues of it and $f(\lambda)$ be a symmetric function of the principal radii of curvature . There exists a function $\mathcal{F}(A)$, such that $F(A)=f(\lambda)$.
We set
\begin{align*}
\dot{ \mathcal{F} }^{pq}(A): =\frac{\partial \hat{\mathcal{F}}}{\partial b_{pq}} \ , \quad  \ddot{\mathcal{F}}^{pq,rs}(A): =\frac{\partial ^2 \hat{\mathcal{F}} }{\partial b_{pq}\partial b_{rs}}.
\end{align*}
and
\begin{align*}
\dot{f}^p(\lambda) :=\frac{\partial f }{\partial \lambda_p}(\lambda)=\dot{\mathcal{F}}^{pp}(A) \ , \quad  \ddot{f}^{pq}(\lambda):=\frac{\partial ^2 f}{\partial \lambda_p\partial \lambda_q}=\ddot{\mathcal{F}}^{pp,qq}(A).
\end{align*}
One can refer to \cite{And07} for more properties of
$\mathcal{F}(A)$.

Denote $\mathcal{L}=\psi\dot{F}^{ij}\nabla^2_{ij}$. We have the following equations (also see \cites{And07, LSW20b, Iva19}).
\begin{lem}
	Along the normalised flow \eqref{s1:flow-n}, we have		\begin{equation} \label{ev-u}
\begin{aligned}
(\partial_t-\mathcal{L})u=(1-k\a)\psi F-\eta u+\psi u\sum\limits_k\dot{F}^{kk},
\end{aligned}
\end{equation}
	\begin{align}\label{bij}
(\partial_t-\mathcal{L})b_{ij}
=\psi_{ij}F+\psi_i F_j+F_i\psi_j+\psi\ddot{F}^{pq,rs}b_{pqi}b_{rsj}+(k\a+1)\psi F\delta_{ij}-\sum\limits_k\dot{F}^{kk}b_{ij}-\eta b_{ij},
\end{align}
\begin{equation} \label{ev-r}
\begin{aligned}
(\partial_t-\mathcal{L})\frac{\rho^2}{2}=F\psi_k u_k-\eta \rho^2-\psi\dot{F}^{ij}b_{ki}b_{kj}+(1+k\a)\psi u F,
\end{aligned}
\end{equation}
	\begin{equation} \label{ev-speed}
\begin{aligned}
(\partial_t-\mathcal{L})\psi\s_k^{\a}=-\eta \psi\dot{F}^{ij}u_{ij}+\psi\dot{F}^{ij}(\psi F-\eta u)\delta_{ij}=- k\a \eta \psi F+\psi^2 F\sum\limits_k\dot{F}^{kk}.
\end{aligned}
\end{equation}	
\end{lem}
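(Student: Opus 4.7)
These four evolution equations are routine but involved computations, which I would carry out by direct differentiation. Three ingredients suffice uniformly: the scalar equation $\partial_t u = \psi F - \eta u$ from \eqref{u1:flow-n}; Euler's homogeneity identity $\dot{F}^{pq}b_{pq} = k\alpha F$ (valid because $F = \sigma_k^{\alpha}$ is homogeneous of degree $k\alpha$ in $b_{ij}$); and the Ricci identities on $\mathbb{S}^n$, which in particular imply that for any support function $u$ the third derivative $b_{ij,k}$ is totally symmetric in its three indices.

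I would first dispose of \eqref{ev-u} and \eqref{ev-speed}, as they require only Euler. For \eqref{ev-u}, simply expand $\mathcal{L}u = \psi\dot{F}^{pq}u_{pq} = \psi\dot{F}^{pq}(b_{pq} - u\delta_{pq}) = k\alpha\psi F - \psi u\sum_k \dot{F}^{kk}$ and subtract from $\partial_t u$. For \eqref{ev-speed}, note that $\psi = \psi(x)$ is $t$-independent, so $\partial_t(\psi F) = \psi\dot{F}^{pq}\partial_t b_{pq} = \psi\dot{F}^{pq}\bigl[(\partial_t u)_{pq} + (\partial_t u)\delta_{pq}\bigr]$; subtracting $\mathcal{L}(\psi F) = \psi\dot{F}^{pq}(\psi F)_{pq}$ cancels the Hessian of $\psi F$ and leaves $-\eta\psi\dot{F}^{pq}u_{pq} + \psi(\psi F - \eta u)\sum_k\dot{F}^{kk}$, which collapses via Euler to the claimed $-k\alpha\eta\psi F + \psi^2 F\sum_k\dot{F}^{kk}$.

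The main bookkeeping obstacle is \eqref{bij}. Since the metric on $\mathbb{S}^n$ is $t$-independent, covariant differentiation commutes with $\partial_t$, giving $\partial_t b_{ij} = (\psi F - \eta u)_{ij} + (\psi F - \eta u)\delta_{ij}$. Expanding by the product rule together with the chain rule $F_{ij} = \ddot{F}^{pq,rs}b_{pq,i}b_{rs,j} + \dot{F}^{pq}b_{pq,ij}$ produces every term in the stated formula except that $\psi\dot{F}^{pq}b_{pq,ij}$ appears in place of $\mathcal{L}b_{ij} = \psi\dot{F}^{pq}b_{ij,pq}$. The key step is then evaluating the commutator: using total symmetry of $b_{ij,k}$ to rewrite $b_{pq,ij} = b_{ip,jq}$ and applying the standard Ricci formula for second derivatives of a symmetric 2-tensor on the unit sphere, one obtains $\dot{F}^{pq}(b_{pq,ij} - b_{ij,pq}) = k\alpha F\delta_{ij} - b_{ij}\sum_k\dot{F}^{kk}$, where the mixed cross terms cancel because $\dot{F}$ and $b$ commute as symmetric matrices. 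Combining with the $\psi F\delta_{ij}$ from $(\partial_t u)\delta_{ij}$ yields the $(k\alpha+1)\psi F\delta_{ij}$ coefficient.

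Finally, for \eqref{ev-r} I would use $\rho^2 = u^2 + |\nabla u|^2$ (the squared distance from the origin to the tangent hyperplane) and compute both sides directly: $\partial_t(\rho^2/2) = u\,\partial_t u + u_k(\partial_t u)_k$, while $\nabla^2_{pq}(\rho^2/2) = u_pu_q + uu_{pq} + u_{kp}u_{kq} + u_k u_{kpq}$. Substituting $u_{ij} = b_{ij} - u\delta_{ij}$ and using total symmetry of $b_{ij,k}$ (modulo Ricci commutators) to rewrite $\dot{F}^{pq}u_{kpq}$ in terms of $F_k - u_k\sum_j\dot{F}^{jj}$, the various $u$, $b$, and Kronecker contributions telescope via Euler's identity into the stated $F\psi_k u_k - \eta\rho^2 - \psi\dot{F}^{pq}b_{kp}b_{kq} + (1+k\alpha)\psi u F$. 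This step is purely algebraic; the only challenge is tracking the cancellations between $uu_{pq}$ and the $u$-linear pieces of $u_{kp}u_{kq}$ after the substitution.
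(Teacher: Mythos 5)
Your approach matches the paper's: each identity is obtained by direct differentiation, reduced to Euler's homogeneity identity $\dot{F}^{pq}b_{pq}=k\alpha F$, the Codazzi symmetry of $\nabla b$, and the Ricci commutation formula on $\mathbb{S}^n$, with the observation (which the paper also makes) that $\dot{F}^{ki}b_{kj}=\dot{F}^{kj}b_{ki}$ to cancel the cross terms in the commutator for \eqref{bij}. The one small slip is in your sketch of \eqref{ev-r}: the correct identity is $\dot{F}^{pq}u_{kpq}=F_k-\dot{F}^{kq}u_q$, not $F_k-u_k\sum_j\dot{F}^{jj}$; the resulting $\dot{F}^{kq}u_q u_k$ then cancels against the $\dot{F}^{pq}u_p u_q$ coming from the $u_pu_q$ part of $\nabla^2(\rho^2/2)$, and with that corrected your plan reproduces the paper's computation.
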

\begin{proof}
Using \eqref{u1:flow-n}, we have the evolution of the support function $u$. By direct calculations, 
\begin{equation} 
\begin{aligned}
(\partial_t-\mathcal{L})u&=\psi F-\eta u-\psi \dot{F}^{ij}u_{ij}\\
&=\psi F-\eta u-\psi \dot{F}^{ij}(b_{ij}-u\delta_{ij})\\
&=(1-k\a)\psi F-\eta u+\psi u\dot{F}^{ij}\delta_{ij}
\end{aligned}
\end{equation}
where we use the homogeneity of $F$ in the last equality.
\begin{equation} 
\begin{aligned}
(\partial_t-\mathcal{L})\frac{\rho^2}{2}=&u_{tk}u_k+u\partial_t u-\psi\dot{F}^{ij}u_{ki}u_{kj}-\psi\dot{F}^{ij}u_{kij}u_k-\psi\dot{F}^{ij}u_iu_j-\psi\dot{F}^{ij}u_{ij}u\\
=&F\psi_ku_k+\psi F_k u_k-\eta\mid\nabla u\mid^2-\psi\dot{F}^{ij}(b_{ki}-u\delta_{ki})(b_{kj}-u\delta_{kj})-\psi\dot{F}^{ij}(b_{kij}-u_j\delta_{ki})u_k\\
&+(\psi F-\eta u)u-\psi\dot{F}^{ij}u_i u_j-\psi u \dot{F}^{ij}(b_{ij}-u\delta_{ij})\\
=&F\psi_k u_k-\eta \rho^2-\psi\dot{F}^{ij}b_{ki}b_{kj}+(1+k\a)\psi u F
\end{aligned}
\end{equation}
where we use the Codazzi equation $b_{ikj}=b_{ijk}$ and the homogeneity of $F$ in the last equation.
Next, we calculate the evolution of the speed $\Phi$:
\begin{equation} 
\begin{aligned}
(\partial_t-\mathcal{L})\Phi &=\psi\dot{F}^{ij}\partial_t b_{ij}-\psi\dot{F}^{ij}(\psi F)_{ij}=\psi\dot{F}^{ij}(\partial_t u)_{ij}+\psi\dot{F}^{ij}(\partial_t u)\delta_{ij}-\psi\dot{F}^{ij}(\psi F)_{ij}\\
&=- \psi\eta\dot{F}^{ij}u_{ij}+\psi\dot{F}^{ij}(\psi F-\eta u)\delta_{ij}=-k\a\eta \psi  F+\psi^2 F\dot{F}^{ij}\delta_{ij}
\end{aligned}
\end{equation}
where we use the $k\a$-homogeneity of $F$ in the last equality. At last we calculate the evolution of $b_{ij}$:
\begin{equation}\label{bij1}
\begin{aligned}
(\partial_t-\mathcal{L})b_{ij}&=(\psi F-\eta u)_{ij}+(\psi F-\eta u)\delta_{ij}-\psi\dot{F}^{kl}b_{ijkl}\\
&=\psi_{ij}F+\psi_i F_j+F_i\psi_j+\psi\ddot{F}^{pq,rs}b_{pqi}b_{rsj}+\psi\dot{F}^{kl}b_{klij}-\eta b_{ij}+\psi F\delta_{ij}-\psi\dot{F}^{kl}b_{ijkl}.
\end{aligned}
\end{equation}
Using the Codazzi equation and the Ricci identity, we have
\begin{equation}
\begin{aligned}\label{Ric id}
\dot{F}^{kl}b_{klij}=\dot{F}^{kl}b_{kilj}&=\dot{F}^{kl}b_{kijl}+\dot{F}^{kl}b_{pk}R_{pilj}+\dot{F}^{kl}\delta_{pi}R_{pklj}\\
&=\dot{F}^{kl}b_{ijkl}+\dot{F}^{kl}b_{kl}\delta_{ij}-\dot{F}^{kl}b_{jk}\delta_{il}+\dot{F}^{kl}b_{il}\delta_{kj}-\dot{F}^{kl}b_{ij}\delta_{kl}\\
&=\dot{F}^{kl}b_{ijkl}+k\a F\delta_{ij}-\dot{F}^{kl}\delta_{kl}b_{ij}.
\end{aligned}
\end{equation}
Here we used the fact that $\dot{F}^{ki} b_{kj} = \dot{F}^{kj} b_{ki}$ (see \cite{BIS21} for reference) and $F$ is homogeneous of degree $k \a$. By \eqref{bij1}, we have
\begin{align*}
(\partial_t-\mathcal{L})b_{ij}
=\psi_{ij}F+\psi_i F_j+F_i\psi_j+\psi\ddot{F}^{pq,rs}b_{pqi}b_{rsj}+(k\a+1)\psi F\delta_{ij}-\dot{F}^{kl}\delta_{kl}b_{ij}-\eta b_{ij}.
\end{align*}
Thus, we obtain \eqref{bij}.
%\begin{equation}
%\begin{aligned}\label{h_i^j}
%\end{aligned}
%\end{equation}
%For the first term of \eqref{h_i^j}, we have
%\begin{equation}
%\begin{aligned}\label{Phi_ij}
%\end{aligned}
%\end{equation}
%Using the Codazzi equation, the Ricci identities and $\dot{F}^{pq}h_q{}^l=\dot{F}^{lq}h_q{}^p$, we have
%\begin{equation}
%\begin{aligned}\label{Ric id}
%\dot{F}^{kl}b_{klij}=\dot{F}^{kl}b_{kilj}&=\dot{F}^{kl}b_{kijl}+\dot{F}^{kl}b_{pk}R_{pilj}+\dot{F}^{kl}\delta_{pi}R_{pklj}\\
%&=\dot{F}^{kl}b_{ijkl}+\dot{F}^{kl}b_{kl}\delta_{ij}-\dot{F}^{kl}b_{jk}\delta_{il}+\dot{F}^{kl}b_{il}\delta_{kj}-\dot{F}^{kl}b_{ij}\delta_{kl}\\
%&=\dot{F}^{kl}b_{ijkl}+k\a F\delta_{ij}-\dot{F}^{kl}\delta_{kl}b_{ij}\\
%\end{aligned}
%\end{equation}
%where we used the fact that $\dot{F}^{ki} b_{kj} = \dot{F}^{kj} b_{ki}$ and $F$ is homogeneous of degree $k \a$.
%
%Inserting \eqref{Ric id} and \eqref{Phi_ij} into \eqref{h_i^j}, we have
%\begin{align*}
%\end{align*}
\end{proof}
\subsection{The bounds of $\eta$}
\begin{lem}\label{eta low}
Under the normalised flow \eqref{s1:flow-n}, we have the uniform lower bound of $\eta$, i.e. there exists some $c>0$, such that
\begin{equation*}
\eta\geq c.
\end{equation*}
\end{lem}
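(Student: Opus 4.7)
The plan is to combine the normalisation constraint \eqref{normalised volume} with the monotonicity of $\mathcal J(u)$ from Lemma \ref{mon dec}, via a single application of Hölder's inequality. The key observation is that the exponent $1+\alpha$ appearing in $\eta$ and the exponent $1+\tfrac{1}{\alpha}$ appearing in $\mathcal J(u)$ are Hölder conjugates, and the weight $\psi$ has matching fractional powers in both.

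Concretely, I would split the integrand in the constraint $\int_{\mathbb S^n} u\,\sigma_k\,d\mu_{S^n}=|S^n|$ as
\begin{equation*}
u\,\sigma_k \;=\; \bigl(\psi^{-\frac{1}{1+\alpha}} u\bigr)\cdot\bigl(\psi^{\frac{1}{1+\alpha}}\sigma_k\bigr),
\end{equation*}
and apply Hölder with conjugate exponents $p=\tfrac{1+\alpha}{\alpha}$ and $q=1+\alpha$. This yields
\begin{equation*}
|S^n| \;\le\; \Bigl(\int_{\mathbb S^n}\psi^{-\frac{1}{\alpha}}u^{1+\frac{1}{\alpha}}\,d\mu_{S^n}\Bigr)^{\!\frac{\alpha}{1+\alpha}}\Bigl(\int_{\mathbb S^n}\psi\,\sigma_k^{1+\alpha}\,d\mu_{S^n}\Bigr)^{\!\frac{1}{1+\alpha}}.
\end{equation*}
The first factor is exactly $\mathcal J(u)^{\alpha/(1+\alpha)}$, which by Lemma \ref{mon dec} is bounded above by $\mathcal J(u_0)^{\alpha/(1+\alpha)}$. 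Rearranging and dividing by $|S^n|$ gives
\begin{equation*}
\eta(t) \;=\; \frac{1}{|S^n|}\int_{\mathbb S^n}\psi\,\sigma_k^{1+\alpha}\,d\mu_{S^n} \;\ge\; \frac{|S^n|^{\alpha}}{\mathcal J(u_0)^{\alpha}},
\end{equation*}
which is the desired uniform lower bound depending only on the initial datum.

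There is essentially no obstacle here — everything reduces to recognising the correct Hölder pairing, and the monotone quantity $\mathcal J$ provided in Lemma \ref{mon dec} is tailored precisely to close the estimate. The only thing to double-check is that $\alpha>\tfrac{1}{k}>0$ ensures both Hölder exponents are strictly greater than $1$, so the inequality is a genuine one-sided bound; and that the upper bound $\mathcal J(u_0)$ is finite, which holds because the initial hypersurface is smooth and uniformly convex (so $u_0$ is positive and smooth on $\mathbb S^n$).
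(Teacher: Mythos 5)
Your proof is correct, and it takes a genuinely different route from the paper's. The paper's argument is purely geometric and does not invoke Lemma~\ref{mon dec}: from \eqref{normalised volume} and the Minkowski integral formula \eqref{mink formula} it derives that $\int_{\mathbb{S}^n}\sigma_{k+1}\,d\mu_{S^n}$ is a fixed positive constant, then uses the Aleksandrov--Fenchel inequality \eqref{mink ineq} to bound $\int_{\mathbb{S}^n}\sigma_k\,d\mu_{S^n}$ from below, and finally applies Jensen's inequality to bound $\int_{\mathbb{S}^n}\psi\,\sigma_k^{1+\alpha}\,d\mu_{S^n}$ from below. Your argument instead pairs the normalisation constraint directly against the monotone quantity $\mathcal{J}(u)$ via one Hölder inequality, exploiting that $1+\alpha$ and $\tfrac{1+\alpha}{\alpha}$ are conjugate and that the power of $\psi$ splits compatibly. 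I checked the details: with $p=\tfrac{1+\alpha}{\alpha}$, $q=1+\alpha$ one has $(\psi^{-1/(1+\alpha)}u)^p=\psi^{-1/\alpha}u^{1+1/\alpha}$ and $(\psi^{1/(1+\alpha)}\sigma_k)^q=\psi\,\sigma_k^{1+\alpha}$, so the computation closes exactly as you say, giving $\eta\ge |S^n|^{\alpha}/\mathcal{J}(u_0)^{\alpha}$. The trade-offs: your route is shorter, avoids the Aleksandrov--Fenchel machinery, and produces an explicit constant, but the resulting bound depends on the initial hypersurface through $\mathcal{J}(u_0)$, whereas the paper's constant depends only on $n,k,\alpha,\min\psi$; also, the paper's argument holds pointwise in $t$ from the constraint alone, while yours additionally leans on the monotonicity established in Lemma~\ref{mon dec}. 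Both are adequate for what the lemma is used for.
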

\begin{proof}
Combining Minkowski integral formula \eqref{mink formula} and Aleksandrov-Fenchel inequality \eqref{mink ineq}, we obtain by \eqref{normalised volume}
\begin{align*}
|S^n|\equiv \int_{\mathbb{S}^n} u\s_kd\mu_{S^n}=C'_{n,k}\int_{\mathbb{S}^n} \s_{k+1}d\mu_{S^n}\leq C''_{n,k}(\int_{\mathbb{S}^n} \s_kd\mu_{S^n})^{\frac{k+1}{k}}.
\end{align*}
Using H$\ddot{\text{o}}$lder inequality we obtain there exists some constant $c>0$, such that 
\begin{align*}
\int_{\mathbb{S}^n} \psi\s_k^{1+\a}d\mu_{S^n}\geq \min_{\mathbb{S}^n}\psi (\int_{\mathbb{S}^n} \s_kd\mu_{S^n})^{1+\a} \geq c.
\end{align*}

\end{proof}

\begin{lem}\label{eta upp}
Under the normalised flow \eqref{s1:flow-n}, we have the uniform upper bound of $\eta$, i.e. there exists some positive constant C, such that
\begin{equation*}
\eta\leq C.
\end{equation*}
\end{lem}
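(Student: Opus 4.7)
The plan is to derive the upper bound on $\eta(t)$ by studying an evolution equation for $\eta$ itself, supplemented by Newton and Aleksandrov--Fenchel inequalities. First I would differentiate $\eta(t)\,|S^n| = \int_{\mathbb{S}^n}\psi \sigma_k^{1+\a}\, d\mu_{S^n}$ in time along the normalised flow \eqref{s1:flow-n} using $\partial_t \sigma_k = \sigma_k^{ij}\partial_t b_{ij}$, where $\partial_t b_{ij} = (\psi F - \eta u)_{ij} + (\psi F - \eta u)\delta_{ij}$ comes from \eqref{u1:flow-n}, together with the standard identities $\sigma_k^{ij}b_{ij} = k\sigma_k$ and $\sum_i \sigma_k^{ii} = (n-k+1)\sigma_{k-1}$.

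Next I would integrate by parts twice on $\mathbb{S}^n$, invoking the divergence-free property $\nabla_i \sigma_k^{ij}(W_u) = 0$ (a consequence of the Codazzi equation for $W_u = \nabla^2 u + uI$). This should produce an identity of the schematic form
\[
|S^n|\,\partial_t \eta = -(1+\a)\int \sigma_k^{ij}\Phi_i\Phi_j\, d\mu_{S^n} + (1+\a)(n-k+1)\int \Phi^2 \sigma_{k-1}\, d\mu_{S^n} - k(1+\a)|S^n|\eta^2,
\]
in which the first term is non-positive (since $\sigma_k^{ij}$ is positive-definite on convex hypersurfaces) and the last term is a strong $-\eta^2$ damping.

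The main obstacle will be controlling the positive middle term $\int \Phi^2 \sigma_{k-1}\, d\mu_{S^n}$, because the ratio $\sigma_{k-1}/\sigma_k$ has no pointwise upper bound in general. To overcome this, I plan to combine the Newton inequality $\sigma_{k-1}\sigma_{k+1} \leq \tfrac{k(n-k)}{(k+1)(n-k+1)}\sigma_k^2$, the preserved Minkowski integral $\int \sigma_{k+1}\, d\mu_{S^n} = \tfrac{n-k}{k+1}|S^n|$ (from \eqref{mink formula}), the upper bound $u\leq C$ from Corollary \ref{upp}, and the Aleksandrov--Fenchel inequality \eqref{AF ineq}, with the goal of bounding $\int \Phi^2\sigma_{k-1}\, d\mu_{S^n} \leq C\eta^\beta$ for some $\beta<2$. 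The resulting ODE inequality $\partial_t \eta \leq C\eta^\beta - c\eta^2$ then yields a uniform upper bound on $\eta$ by standard comparison, with the constant depending only on the initial data and $\psi$.
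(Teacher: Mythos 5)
Your setup is correct and your schematic identity after integrating by parts is precisely the one the paper is implicitly working with, but the plan to discard the non-positive term $-\int\sigma_k^{ij}\Phi_i\Phi_j\,d\mu$ and control $\int\Phi^2\sigma_{k-1}\,d\mu$ separately cannot succeed. The essential point --- and this is exactly what the paper exploits --- is that the first two terms must \emph{not} be separated: summed, they equal
\[
\int\Phi\,\dot{\sigma}_k^{ij}(W_u)\bigl(\nabla^2_{ij}\Phi+\Phi\delta_{ij}\bigr)\,d\mu
= k\int\Phi\,\sigma_k(W_\Phi,W_u,\ldots,W_u)\,d\mu
\ \propto\ V_{k+1}(\Phi,\Phi,u,\ldots,u),
\]
a genuine mixed volume. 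Applying Aleksandrov--Fenchel \eqref{AF ineq} directly to this mixed volume, together with $V_{k+1}(\Phi,u,\ldots,u)\propto\int\Phi\,\sigma_k\,d\mu=\eta|S^n|$ and the preserved normalization $V_{k+1}(u,\ldots,u)\propto\int u\,\sigma_k\,d\mu=|S^n|$, gives $V_{k+1}(\Phi,\Phi,u,\ldots,u)\leq V_{k+1}(\Phi,u,\ldots,u)^2/V_{k+1}(u,\ldots,u)$, which upon substitution cancels your $-k(1+\a)|S^n|\eta^2$ damping exactly and yields the cleaner conclusion $\partial_t\eta\leq 0$, hence $\eta(t)\leq\eta(0)$. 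No ODE comparison or sublinear interpolation is needed.

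The bound $\int\Phi^2\sigma_{k-1}\,d\mu\leq C\eta^{\beta}$ with $\beta<2$ that your plan hinges on is not available at this stage of the argument. There is as yet no pointwise geometric control on $M_t$ beyond $u\leq C$; the Newton inequality $\sigma_{k-1}\sigma_{k+1}\leq c\,\sigma_k^2$ only converts the task into bounding $\int\Phi^2\sigma_k^2/\sigma_{k+1}\,d\mu$, which requires a \emph{pointwise} lower bound on $\sigma_{k+1}$ that the preserved integral $\int\sigma_{k+1}\,d\mu$ does not supply. More structurally, when $\Phi=\psi\sigma_k^{\a}$ oscillates strongly the gradient term $\int\sigma_k^{ij}\Phi_i\Phi_j\,d\mu$ becomes large while the mixed volume $V_{k+1}(\Phi,\Phi,u,\ldots,u)$ stays controlled by $c\,\eta^2$ via Aleksandrov--Fenchel; their difference, $(n-k+1)\int\Phi^2\sigma_{k-1}\,d\mu$, can therefore exceed any fixed multiple of $\eta^{\beta}$ with $\beta<2$. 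The Aleksandrov--Fenchel inequality you list among your tools is indeed the right ingredient, but it must be applied to the \emph{combined} first two terms rather than to the middle term alone.
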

\begin{proof}
First we calculate the evolution of $\eta(t)$:
\begin{equation}\label{eta app}
\begin{aligned}
\frac{\partial \eta}{\partial t}&=\partial t \frac{\int_{\mathbb{S}^n}\psi\s_k^{1+\a}d\mu_{S^n}}{\mid S^n\mid}=\frac{\int_{\mathbb{S}^n}(1+\a)\psi\s_k^{\a}\dot{\s_k}^{ij}\partial_t b_{ij}d\mu_{S^n}}{\mid S^n\mid}\\
&=(1+\a)\frac{\int_{\mathbb{S}^n}\psi\s_k^{\a}\dot{\s_k}^{ij}\f(\partial_t u)_{ij}+\partial_t u\delta_{ij}\r d\mu_{S^n}}{\mid S^n\mid}\\
&=(1+\a)\frac{\int_{\mathbb{S}^n}\psi\s_k^{\a}\dot{\s_k}^{ij}\f(\psi\s_k^{\a}-\eta(t)u)_{ij}+\f\psi\s_k^{\a}-\eta(t)u\r\delta_{ij}\r d\mu_{S^n}}{\mid S^n\mid}\\
&=(1+\a)\frac{\int_{\mathbb{S}^n}\psi\s_k^{\a}\dot{\s_k}^{ij}\f(\psi\s_k^{\a})_{ij}+\psi\s_k^{\a}\delta_{ij}\r d\mu_{S^n}}{\mid S^n\mid}-k(1+\a)\frac{(\int_{\mathbb{S}^n}\psi\s_k^{1+\a}d\mu_{S^n})^2}{\mid S^n\mid^2}.
\end{aligned}
\end{equation}

By Aleksandrov-Fenchel inequality \eqref{AF ineq}, we have
\begin{align}\label{af2}
V_{k+1}(\psi\s_k^{\a},u,\cdots,u)^2\geq V_{k+1}(\psi\s_k^{\a},\psi\s_k^{\a},u,\cdots,u)V_{k+1}(u,u,\cdots,u).
\end{align}
plugging \eqref{af2} into \eqref{eta app}, we obtain
\begin{align*}
\frac{\partial \eta}{\partial t}\leq k(1+\a)\frac{(\int_{\mathbb{S}^n}\psi\s_k^{1+\a}d\mu_{S^n})^2}{\mid S^n\mid\int_{\mathbb{S}^n}u\s_k d\mu_{S^n}}-k(1+\a)\frac{(\int_{\mathbb{S}^n}\psi\s_k^{1+\a}d\mu_{S^n})^2}{\mid S^n\mid^2}=0.
\end{align*}
Hence $\eta(t)\leq\eta(0)$ and the upper bound of $\eta(t)$ depends on the initial hypersurface.
\end{proof}

\section{The lower bound of the support function}\label{sec:3}

\subsection{A Gradient Estimate}
\begin{lem}\label{c1}
Let $M_t$, $t\in[0,T)$, be an smooth, uniformly convex solution to the normalised flow \eqref{s1:flow-n}. For the case $1\leq k\leq n-1$ and $\a>\frac{1}{k}$, there exists a constant $C>0$ and $\gamma \in (0,1)$, such that the support function $u(\cdot,t)$ on $\mathbb{S}^n \times [0,T)$ satisfies
\begin{equation}\label{gra}
\frac{|\nabla u|^2}{u^\gamma}\leq C
\end{equation}
where $C$, $\gamma$ depends on n, k, $M_0$, $\min \psi$ and  $\|\psi\|_{C^1}$.
\end{lem}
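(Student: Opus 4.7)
The plan is to establish the estimate by applying the parabolic maximum principle to an auxiliary test function of the form $W = \log|\nabla u|^2 - \gamma \log u$ (equivalently $Q = |\nabla u|^2/u^\gamma$) for a sufficiently small $\gamma \in (0,1)$ chosen in terms of $n,k,\alpha$. If $W$ achieves its spacetime maximum at $t=0$, the bound is immediate from the initial data; otherwise let $(x_0,t_0)$ be an interior maximum with $t_0 > 0$, so that $\nabla W = 0$, $\nabla^2 W \leq 0$, and $(\partial_t - \mathcal{L}) W \geq 0$ at that point.

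The condition $\nabla W = 0$ gives $2u_k u_{ki} = \gamma |\nabla u|^2 u_i/u$, and together with $u_{ki} = b_{ki} - u\delta_{ki}$ this yields $u_k b_{ki} = (u + \frac{\gamma|\nabla u|^2}{2u})u_i$. In particular $\nabla u$ is an eigenvector of $(b_{ij})$, so I would choose a local orthonormal frame in which $(b_{ij})$ is diagonal, $u_1 = |\nabla u|$, $u_i = 0$ for $i \geq 2$, and $b_{11} = u + \frac{\gamma|\nabla u|^2}{2u}$. Using \eqref{ev-u} together with the evolution of $|\nabla u|^2$ derived from \eqref{ev-r} and $(\partial_t - \mathcal{L}) u^2 = 2u(\partial_t - \mathcal{L})u - 2\psi \dot{F}^{ij} u_i u_j$, I would expand $(\partial_t - \mathcal{L}) W = \frac{(\partial_t-\mathcal{L})|\nabla u|^2}{|\nabla u|^2} - \gamma\frac{(\partial_t-\mathcal{L})u}{u} + (\gamma^2 - \gamma)\frac{\psi \dot{F}^{ij} u_i u_j}{u^2}$, where the last equality uses $\nabla W = 0$ to rewrite the cross-term; in the chosen frame this reduces to $(\gamma^2 - \gamma)\psi\dot{F}^{11}|\nabla u|^4/u^2$, a strictly negative contribution.

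The positive terms to be controlled at the maximum are $2F\psi_k u_k$ (bounded via $\|\psi\|_{C^1}$ and Cauchy--Schwarz), $4k\alpha \psi u F$ (controlled via the upper bound $u\leq C$ from Corollary \ref{upp}), $2\psi \dot{F}^{11}|\nabla u|^2$, and crucially the term $\gamma(k\alpha - 1)\psi F |\nabla u|^2/u$ arising from $-\gamma(1-k\alpha)\psi F/u$ in $-\gamma(\partial_t - \mathcal{L}) u/u$, which is positive exactly because $\alpha > 1/k$. The main negative terms are $-(2-\gamma)\eta |\nabla u|^2$ (using $\eta \geq c > 0$ from Lemma \ref{eta low}), the cross-term $(\gamma^2 - \gamma)\psi\dot{F}^{11}|\nabla u|^4/u^2$, and $-2\psi \dot{F}^{ij}b_{ki}b_{kj}$. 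For the last one, a direct computation in the diagonal frame gives $\dot{F}^{ij}b_{ki}b_{kj} = \alpha\sigma_k^{\alpha-1}(\sigma_1\sigma_k - (k+1)\sigma_{k+1})$, and the Newton--Maclaurin inequality $(k+1)\sigma_{k+1} \leq \frac{n-k}{n}\sigma_1\sigma_k$ yields $\dot{F}^{ij}b_{ki}b_{kj} \geq \frac{k\alpha}{n}\sigma_1 F \geq \frac{k\alpha}{n} b_{11} F$; substituting $b_{11} \geq \frac{\gamma|\nabla u|^2}{2u}$ then produces a negative contribution of order $-\frac{k\alpha\gamma}{n}\psi F |\nabla u|^2/u$.

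The main obstacle is that the Newton--Maclaurin bound alone may not dominate the positive coefficient $\gamma(k\alpha-1)$ when $k\alpha$ is not close to $1$. The resolution is to extract an extra negative $\psi F|\nabla u|^2/u$ contribution from the term $(\gamma^2-\gamma)\psi \dot{F}^{11}|\nabla u|^4/u^2$: since $\sigma_k \geq b_{11}\sigma_{k-1}(b\vert 1)$, we have $\dot{F}^{11} = \alpha \sigma_k^{\alpha-1}\sigma_{k-1}(b\vert 1) \geq \alpha F/b_{11}$, so combined with the upper expression for $b_{11}$ this yields a further negative term comparable to $-(1-\gamma)\alpha \psi F |\nabla u|^2/u$. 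Choosing $\gamma \in (0,1)$ small enough so that the combined negative coefficient of $\psi F|\nabla u|^2/u$ strictly exceeds $\gamma(k\alpha-1)$, and absorbing the remaining lower-order positive contributions using $-(2-\gamma)\eta|\nabla u|^2$, $u\leq C$, and $\|\psi\|_{C^1}$, gives $\psi F|\nabla u|^2/u \leq C$ at the maximum; together with $u \leq C$ this bounds $Q = |\nabla u|^2/u^\gamma$ and yields the desired estimate \eqref{gra}.
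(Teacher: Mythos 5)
Your setup matches the paper's: same test function $Q=|\nabla u|^2/u^\gamma$ (equivalently $W=\log Q$), same critical equation giving $b_{11}=u+\frac{\gamma|\nabla u|^2}{2u}$, same identification of $\gamma(k\alpha-1)\psi F|\nabla u|^2/u$ as the problematic positive term, and you correctly observe that the Newton--Maclaurin contribution $\dot F^{ij}b_{ki}b_{kj}\geq\frac{k\alpha}{n}\sigma_1 F\geq\frac{k\alpha}{n}b_{11}F$ produces a term proportional to $\gamma$ and hence cannot by itself dominate. However, your ``resolution'' is where the argument breaks down. You claim that $\sigma_k\geq b_{11}\sigma_{k-1}(b\,|\,1)$ implies $\dot F^{11}=\alpha\sigma_k^{\alpha-1}\sigma_{k-1}(b\,|\,1)\geq\alpha F/b_{11}$; the implication goes the wrong way. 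From $\sigma_k\geq b_{11}\sigma_{k-1}(b\,|\,1)$ one gets $\sigma_{k-1}(b\,|\,1)\leq\sigma_k/b_{11}$ and hence $\dot F^{11}\leq\alpha F/b_{11}$. Since the coefficient $(\gamma^2-\gamma)$ is negative, an \emph{upper} bound on $\dot F^{11}$ makes the term $(\gamma^2-\gamma)\psi\dot F^{11}|\nabla u|^4/u^2$ less negative, not more; there is in general no useful lower bound on $\dot F^{11}$ alone, because $\sigma_{k-1}(b\,|\,1)$ can be arbitrarily small when the eigenvalues other than $b_{11}$ are small.

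What is actually needed at this point --- and what the paper does --- is to use the critical equation once more to trade a factor of $|\nabla u|^2/u$ for $b_{11}$, so that $(\gamma^2-\gamma)\psi\dot F^{11}|\nabla u|^4/u^2$ becomes comparable to $-(1-\gamma)\psi\dot F^{11}b_{11}|\nabla u|^2/u$; then one invokes the \emph{product} bound $\dot F^{11}b_{11}\geq\frac{k\alpha}{n}F$. That bound, which rests on $\lambda_{\max}\sigma_{k-1}(\lambda\,|\,\max)\geq\frac{k}{n}\sigma_k$, is valid only when $b_{11}$ is the largest eigenvalue, and that cannot be assumed: after rotating to align $e_1$ with $\nabla u$ there is no freedom left to reorder the eigenvalues. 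This is precisely why the paper's proof splits into cases ($u_{11}\geq u_{22}$ versus $u_{22}>u_{11}$, the latter subdivided again according to $Nu_{11}\lessgtr u_{22}$), each time locating the dominant eigenvalue and transferring the bound through the appropriate $\dot F^{ii}$-term. Your proposal skips this case analysis entirely, and the single reversed inequality it relies on is false; so as written the proof has a genuine gap.
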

\begin{proof}
Denote $Q$ as $\frac{|\nabla u|^2}{u^\gamma}$. At the maximal point $(x,t)$ of $Q$, we have the critical equation, 
\begin{align}\label{cri}
\frac{|\nabla u|_i^2}{|\nabla u|^2}=\frac{2u_{li}u_l}{|\nabla u|^2}=\gamma \frac{u_i}{u}.
\end{align}
Besides,
\begin{equation}
\begin{aligned}\label{gra1}
0& \leq (\partial_t-\mathcal{L})\log Q\\
& =\frac{2u_{tl}u_l-2\psi\dot{F}^{ij}u_{li}u_{lj}-2\psi\dot{F}^{ij}u_{lij}u_l}{|\nabla u|^2}-\gamma\frac{u_t-\psi\dot{F}^{ij}u_{ij}}{u} +\frac{\psi\dot{F}^{ij}|\nabla u|^2_i|\nabla u|^2_j}{|\nabla u|^4}-\gamma\frac{\psi\dot{F}^{ij}u_iu_j}{u^2}\\
& =\frac{2u_{tl}u_l-2\psi\dot{F}^{ij}u_{li}u_{lj}-2\psi\dot{F}^{ij}u_{lij}u_l}{|\nabla u|^2}-\gamma \frac{u_t-\psi\dot{F}^{ij}u_{ij}}{u} -\f \gamma-\gamma^2 \r\frac{\psi\dot{F}^{ij}u_iu_j}{u^2}.
\end{aligned}
\end{equation}
First we choose the local orthonormal frame ($e_1,e_2,\cdots,e_n$) near $x$. By rotating the coordinate, we further assume $\nabla u= u_1e_1$. From $\eqref{cri}$, we have
\begin{align}\label{u11}
u_{11}=\frac{\gamma}{2}\frac{u_1^2}{u}
\end{align}
and $u_{1i}=0$, $\forall i\neq 1$. Now we can assume ($u_{ij}$) is diagonal, i.e. $u_{ij}=u_{ii}\delta{ij}$. 
Inserting \eqref{u11} into \eqref{gra1}, we obtain
\begin{align*}
0& \leq (\partial_t-\mathcal{L})\log Q\\
 = &\frac{2(\psi F-\eta u)_1}{u_1}-\frac{2\psi\dot{F}^{ii}u_{ii}^2}{|\nabla u|^2}-\frac{2\psi\dot{F}^{ii}u_{1ii}}{u_1}-\gamma \frac{\psi F}{u}+\gamma\eta +\gamma\frac{k\a\psi F}{u} -\gamma\psi \sum_i\dot{F}^{ii} - \f \gamma-\gamma^2 \r\frac{\psi\dot{F}^{11}u_1^2}{u^2}\\
= &\frac{2\psi_1 F}{u_1}+\frac{2\psi F_1}{u_1}-(2-\gamma)\eta-\frac{2\psi\dot{F}^{ii}u_{ii}^2}{|\nabla u|^2}-\frac{2\psi\dot{F}^{ii}\f b_{1ii}-u_i\delta_1^i \r}{u_1}
-\gamma\frac{(1-k\a)\psi F}{u} -\gamma\psi \sum_i\dot{F}^{ii} \\&-2( 1-\gamma)\frac{\psi\dot{F}^{11}u_{11}}{u}.\\
\end{align*}
Here we use \eqref{u11} and $b_{ij}=u_{ij}+u\delta_{ij}$ in the last equality.
% and $\dot{F}^{ii}=\a\s_k^{\a-1}\dot{\s}_k^{ii}>0$
By Codazzi equation, we have $b_{1ii}=b_{ii1}$. Then
\begin{equation}
\begin{aligned}\label{gra rep}
0& \leq (\partial_t-\mathcal{L})\log Q\\
&=\frac{2\psi_1 F}{u_1}-(2-\gamma)\eta-\sum_i\frac{2\psi\dot{F}^{ii}u_{ii}^2}{|\nabla u|^2}+2\psi\dot{F}^{11}-\gamma\psi \sum_i\dot{F}^{ii} -\gamma\frac{(1-k\a)\psi F}{u} - 2( 1-\gamma)\frac{\psi\dot{F}^{11}u_{11}}{u}.
\end{aligned}
\end{equation}
By \eqref{u11} and Corollary \ref{upp}, we have
%$u_{11}=\frac{\gamma}{2}\frac{u_1^2}{u}=\frac{\gamma}{2}Q u^{\gamma-1}\geq CQ u$,

\begin{align*}
2 \frac{\psi \dot{F}^{11} u_{11}^2}{|\nabla u|^2}=2\psi\dot{F}^{11}\frac{\gamma^2}{4}\frac{u_1^2}{u^2}\geq 2\psi\dot{F}^{11}\frac{\gamma^2}{4}Q u^{\gamma-2} \geq CQ\psi\dot{F}^{11}.
\end{align*}
Besides,
\begin{equation}\label{gra first term}
\frac{2\psi_1 F}{u_1}=\frac{2\psi_1 F u^{1-\frac{\gamma}{2}}}{\sqrt{Q}u}\leq C\frac{F}{\sqrt{Q}u}.
\end{equation} 
Then \eqref{gra rep} turns to 
\begin{align}\label{gra2}
0\leq C\frac{F}{\sqrt{Q}u}-\sum_{i\geq2}\frac{2\psi\dot{F}^{ii}u_{ii}^2}{|\nabla u|^2}-\psi\dot{F}^{11}(CQ-2)+\gamma\frac{k\a\psi F}{u} - 2\f 1-\gamma \r\frac{\psi\dot{F}^{11}u_{11}}{u}.
\end{align} 
Here we use the fact $\dot{F}^{ii}=\a\s_k^{\a-1}\dot{\s}_k^{ii}>0$. 
Now we assume $u_{22}\geq u_{33}\geq\cdots\geq u_{nn}$.
Since $u\leq C$ from corollary \ref{upp}, combining with \eqref{cri}, we have
\begin{equation}\label{b11,u}
u_{11}=\frac{\gamma}{2}\frac{u_1^2}{u}=\frac{\gamma}{2}Q u^{\gamma-1}\geq CQ u.
\end{equation}
Thus, $
b_{11}=u_{11}+u\leq (1+\frac{1}{CQ})u_{11}\leq 2u_{11}.
$
Here we assume $Q>\frac{1}{C}$ without loss of generality.

If $u_{11}\geq u_{22}$, the last term in \eqref{gra2} becomes,
\begin{align*}
2(1-\gamma) \frac{\psi\dot{F}^{11}u_{11}}{u}\geq (1-\gamma) \frac{\psi\dot{F}^{11}b_{11}}{u}\geq \frac{\a(1-\gamma)  k\psi F}{n u}
\end{align*}
where we use $\lambda_1\sigma_{k-1}(\lambda|1)\geq \frac{k}{n}\s_k \f \lambda \r$ \cite[pp. 183-184]{Wang09}.
Choose $\gamma=\frac{1}{2n+1}$ and assume $Q$ large enough. We obtain from \eqref{gra2}
\begin{align*}
0\leq \frac{\psi F}{u}\f \frac{C}{\psi\sqrt{Q}}-\frac{k\a}{2n+1}\r.
\end{align*}
Then $Q\leq C(n,\a,k,\min\psi,\|\psi\|_{C_1})$.

If $u_{22}>u_{11}$, then by \eqref{b11,u} we have
\begin{align}\label{b22}
b_{22}=u_{22}+u\leq\f 1+\frac{1}{C Q}\r u_{22}.
\end{align}
Choose $N>1$. \\
Assume $Nu_{11}\leq u_{22}$, 
Then 
\begin{align*}
\frac{2\psi\dot{F}^{22}u_{22}^2}{u_1^2}>\frac{N\gamma\psi\dot{F}^{22}u_{22}}{u}\geq \frac{N\gamma\psi\dot{F}^{22}b_{22}}{(1+\frac{1}{CQ})u}\geq\frac{N\gamma\a k \psi F}{n(1+\frac{1}{CQ}) u}.
\end{align*}
Assume $Nu_{11}>u_{22}$, 
\begin{align}
2(1-\gamma) \frac{\psi\dot{F}^{11}u_{11}}{u}&>2(1-\gamma) \frac{\psi\dot{F}^{22}u_{22}}{Nu}
\geq 2(1-\gamma) \frac{\psi\dot{F}^{22}b_{22}}{(1+\frac{1}{CQ})Nu}
\geq 2\frac{(1-\gamma)k\a\psi  F }{(1+\frac{1}{CQ})Nnu}.
\end{align}
Choose $\delta>\frac{1}{CQ}$, $N=n(1+2\delta)$ and $\gamma=\frac{2}{N^2+2}$, then $\gamma\frac{k\a\psi F}{u}<\min\{\frac{2(1-\gamma)k\a\psi  F }{(1+\frac{1}{CQ})Nnu}, \frac{N\gamma\a k \psi F}{n(1+\frac{1}{CQ}) u}\}$.
The inequality becomes 
\begin{align}
0\leq C\frac{F}{\sqrt{Q}u}-\psi\dot{F}^{11}(CQ-2)-\f \min\lbrace\frac{2(1-\gamma) }{(1+\frac{1}{CQ})Nn}, \frac{N\gamma}{n(1+\frac{1}{CQ})}\rbrace-\gamma\r \frac{k\a\psi F}{u}.
\end{align}
So we have $Q\leq C(n,\a,k,\min\psi,\|\psi\|_{C_1})$.
\end{proof}

\begin{thm}\label{lower bound}
Let $M_t$, $t\in[0,T)$, be a smooth closed uniformly convex and origin-symmetric solution to the flow \eqref{s1:flow-n}. For the case $1\leq k\leq n$ and $\a>\frac{1}{k}$, the support function has the lower bound, i.e. there exists a constant $c>0$, such that
\begin{align*}
u\geq c.
\end{align*}
\end{thm}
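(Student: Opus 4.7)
The plan is to argue by contradiction, combining the refined gradient estimate with the evolution equation at the spatial minimum of $u$. Suppose for contradiction that $\epsilon_\ell := u_{\min}(t_\ell) \to 0$ along some sequence, and write $u_\ell := u(\cdot, t_\ell)$ with $x_\ell$ a spatial minimiser of $u_\ell$. The uniform upper bound $u \leq C$ (Corollary \ref{upp}) together with the refined gradient estimate (Lemma \ref{c1}) make $v_\ell := u_\ell^{1-\gamma/2}$ uniformly Lipschitz with $\beta := 1-\gamma/2 \in (1/2, 1)$. Extracting a subsequence with $x_\ell \to x_\infty$ yields $u_\ell \to u_\infty$ uniformly on $\mathbb{S}^n$; evenness forces $u_\infty(\pm x_\infty) = 0$, so $u_\infty$ is the support function of a convex body contained in the hyperplane $x_\infty^\perp$, and this degenerate limit is exactly what we want to preclude.

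The geometric heart of the argument is to show that whenever $u_{\min}$ is small the principal radii $\lambda_i(x_\ell)$ at the minimum direction must blow up like $u_{\min}^{-s'}$ for some fixed $s'>0$. The ingredients would be: (i) the slab constraint $|\langle y, x_\ell\rangle| \leq u_{\min}$ on $\mathcal{M}_{t_\ell}$, coming from $u_\ell(\pm x_\ell) = \epsilon_\ell$; (ii) a uniform lower bound on the mean width, obtained by applying the Aleksandrov--Fenchel quermassintegral inequality to the fixed $W_{n-k}(\mathcal{M}_{t_\ell}) = |\mathbb{S}^n|/(n+1)$, which forces $\mathcal{M}_{t_\ell}$ to have substantial extent in directions transverse to $x_\ell$; (iii) the Lipschitz control of $u_\ell^\beta$, which forces $u$ to vanish near $\pm x_\ell$ in a specific H\"older fashion and rules out a ``pointy'' tip at the boundary point $y_\ell^* = \epsilon_\ell x_\ell$. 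Together these should force $\sigma_k(x_\ell,t_\ell) \geq c\,\epsilon_\ell^{-s'}$ for a fixed $s'>0$ depending only on $\beta$, $n$ and $k$.

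Feeding this into \eqref{u1:flow-n} at the spatial minimum (where $\nabla u = 0$, so $\partial_t u_{\min}(t) = \psi(x_t)\sigma_k^\alpha - \eta u_{\min}$) and using $\eta \leq C$ (Lemma \ref{eta upp}) and $\psi \geq \psi_{\min} > 0$ gives
\[
\partial_t u_{\min}(t) \;\geq\; c_0\, u_{\min}(t)^{-s'\alpha} - C_0\, u_{\min}(t),
\]
whose right-hand side is strictly positive as soon as $u_{\min}(t) < c^* := (c_0/C_0)^{1/(1+s'\alpha)}$. This acts as an absorbing lower barrier: $u_{\min}(t)$ cannot cross $c^*$ from above, so $u_{\min}(t) \geq \min\{u_{\min}(0), c^*\} > 0$ uniformly in $t$, contradicting $\epsilon_\ell \to 0$.

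The hard part will be the refined radii estimate in the second paragraph. The trivial bound $b_{ij}(x_\ell) \geq u_{\min}\, \delta_{ij}$ (from $\nabla^2 u \geq 0$ at a minimum) only yields $\sigma_k \geq \binom{n}{k} u_{\min}^k$, and the resulting barrier $c\,u_{\min}^{k\alpha} - C\,u_{\min}$ is dominated by the damping term when $u_{\min}$ is small; it is precisely the stronger radii blow-up, extracted from the interplay of evenness, the slab constraint and the Lipschitz control of $u^\beta$, that reverses the dominance and corresponds to the ``new method'' of lower bound for $u$ alluded to in the introduction.
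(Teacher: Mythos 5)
Your proposal takes a genuinely different route from the paper, and the route has a gap at exactly the step you yourself flag as ``the hard part.''

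The paper's argument is a direct geometric contradiction with the refined gradient estimate. Assuming $u_{\min}(t_i)\to0$, it uses the normalization \eqref{normalised volume} only to guarantee $u_{\max}\geq c>0$; then, working in a $2$-plane through the origin and the two points where $u_{\min}$ and $u_{\max}$ are attained, convexity produces a point $\gamma_{t_i}(s)$ on the hypersurface at which the support function is $\leq 3\,u_{\min}(t_i)\to 0$ while the radial distance $|\gamma_{t_i}(s)|^2=|\nabla u|^2+u^2$ stays bounded below. This forces $|\nabla u|^2/u^{\gamma}\to\infty$ at that point, directly contradicting Lemma~\ref{c1}. No information about the second fundamental form or the speed $\sigma_k$ is needed, and origin-symmetry is used only to place the min and max in the same quadrant.

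Your scheme instead hinges on the quantitative claim $\sigma_k(x_\ell,t_\ell)\geq c\,\epsilon_\ell^{-s'}$ at the spatial minimum, fed into an ODE barrier for $u_{\min}(t)$. That claim is not established, and it is more delicate than your sketch suggests. At the minimum direction the only pointwise information is $\nabla^2 u\geq 0$, so $b_{ij}\geq u_{\min}\delta_{ij}$, which goes the wrong way. To upgrade this you would need the convex body to extend to a \emph{fixed} transverse width in at least $k$ mutually orthogonal directions; only then does the thin-slab geometry force $k$ of the principal radii at the north pole to blow up and hence $\sigma_k\to\infty$. The available data give substantial extent in only \emph{one} transverse direction (from $u_{\max}\geq c$ plus origin-symmetry). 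For $k\geq 2$ the body could a priori be thin in every direction but two, and then $\sigma_k$ at the north pole need not blow up (e.g.\ a body comparable to $\{y:\epsilon^{-2}y_1^2+c^{-2}y_2^2+\delta^{-2}(y_3^2+\cdots)\leq1\}$ with $\delta$ small has $\lambda_1\sim c^2/\epsilon$ large but $\lambda_2,\dots,\lambda_{n}\sim\delta^2/\epsilon$, so $\sigma_k$ can stay bounded). The normalization $\int u\sigma_k=|S^n|$ and the inequalities you cite control integral quantities, not the transverse widths at a single normal direction, so the invocation of Aleksandrov--Fenchel in your item (ii) does not close this. The gradient estimate, used as you propose (Lipschitz control of $u^{1-\gamma/2}$), only bounds $u$ from \emph{above} near $x_\ell$ and hence does not give the required lower bound on $\nabla^2 u(x_\ell)$ either. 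The ODE barrier in your third paragraph is fine \emph{given} the radii claim, but the claim is exactly where the argument breaks; the paper avoids this entirely by contradicting the gradient estimate at a different, carefully chosen point, rather than at the minimum direction.
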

\begin{proof}
We prove it by contradiction. First, we assume there exists a sequence $\lbrace t_i\rbrace\rightarrow T$, such that $u\rightarrow 0$. By \eqref{normalised volume}, $u_{\max}$ has a positive lower bound $c$. At a fixed time $t_i$, let $p_{t_i}$, $q_{t_i}\in M_{t_i}$, such that $u_{\min}(t_i)=u(p_{t_i})$, $u_{\max}(t_i)=u(q_{t_i})$. Then we can assume $p_{t_i}$, $q_{t_i}$ in the same quadrant of $R^{n+1}$, since $M^n_{t_i}$ is origin-symmetric. Now we can choose a plane $\mathbb{P}^2_{t_i}$ in $\mathbb{R}^{n+1}$, such that $p_{t_i}$, $q_{t_i}$, the origin $o$ in it and the outer normal vector at $p_{t_i}$ as $y$-axis and $q_{t_i}$ is in the negative $x$-axis direction. Here we denote $q_{t_i}=(q^1_{t_i}, q^2_{t_i})$ (refer to Figure 1). Let $\gamma_{t_i}=\mathbb{P}^{2}_{t_i}\cap M_{t_i}$, $\gamma_{t_i}(0)=q_{t_i}$ and $\gamma_{t_i}(1)=p_{t_i}$. And $\gamma_{t_i}$ is a convex curve, since $M_{t_i}$ is uniformly convex. Denote the included angle between any vector $\overrightarrow{v}$ and $x$-axis as $\a(\overrightarrow{v})$. Then tan$\a\f \overrightarrow{q_{t_i}p_{t_i}}\r \leq\frac{u_{\min}(t_i)}{-q^1_{t_i}}\rightarrow 0$, tan$\a\overrightarrow{\gamma'(0)}=\frac{-q^1_{t_i}}{q^2_{t_i}}\rightarrow\infty$ as $u_{\min}(t_i)\rightarrow 0$ and $u_{\max}(t_i)\geq c$. We can choose $i$ large enough, such that $\a\f\overrightarrow{\gamma_{t_i}'(0)}\r> $ arctan$\frac{2u_{\min}(t_i)}{-q^1_{t_i}}$. Then  there exists $s\in(0,1)$, such that arctan$\overrightarrow{\gamma_{t_i}'(s)}=$ arctan$\frac{2u_{\min}(t_i)}{-q^1_{t_i}}$. For the convexity of $\gamma_{t_i}$, we have $\a \f \overrightarrow{\gamma_{t_i}(0) \gamma_{t_i}(s)} \r > \a \f \overrightarrow{\gamma_{t_i}'(s)} \r$ and tan$\a \f \overrightarrow{\gamma_{t_i}(0) \gamma_{t_i}(s)} \r >$ tan$\a \f \overrightarrow{\gamma_{t_i}'(s)} \r$. By direct computation, we have $\gamma_{t_i}^1(s)-\gamma_{t_i}^1(0)<\frac{-q^1_{t_i}}{2}$. Then $\mid\gamma_{t_i}^1(s)\mid>\frac{-q^1_{t_i}}{2}$ and $\gamma_{t_i}^2(s)<u_{\min}(t_i)$ where $\gamma_{t_i}(s)=\f\gamma_{t_i}^1(s),\gamma_{t_i}^2(s)\r$. Denote $\nu^{p}\f\gamma_{t_i}(s)\r$ as the projection of $\nu\f\gamma_{t_i}(s)\r$ onto $\mathbb{P}^2$. \\
\begin{align*}
u\f\gamma_{t_i}(s)\r=\langle\gamma_{t_i}(s),\nu\f \gamma_{t_i}(s)\r\rangle=\langle\f\gamma_{t_i}^1(s),\gamma_{t_i}^2(s)\r,\nu^{p}\f \gamma_{t_i}(s)\r\rangle
\end{align*}
Then tan $\a\f\nu^{p}\f\gamma_{t_i}\f s\r\r \r=\frac{q^1_{t_i}}{2u_{\min}(t_i)}$ and
\begin{align*}
u \f \gamma_{t_i}(s) \r \leq \gamma_{t_i}^1(s) \cos (\pi-\a(\nu ^{p} ( \gamma_{t_i} (s)))+\gamma_{t_i}^2(s)\leq \gamma_{t_i}^1(s) \cot (-\a(\nu^{p}( \gamma_{t_i} (s)))+\gamma_{t_i}^2(s) \leq 3u_{\min}(t_i).
\end{align*}
Meanwhile, $\mid \gamma_{t_i}(s)\mid> \frac{-q^1_{t_i}}{2}$. Since $\mid \gamma_{t_i}(0) \mid=u_{\max}=\sqrt{(q^1_{t_i})^2+(q^2_{t_i})^2} \geq C$ and $q^2_{t_i}\leq u_{\min} \rightarrow 0$, there exists $i$ large, such that $\mid\gamma_{t_i}(s)\mid> \frac{C}{3}$. By Lemma \ref{c1} and $\mid\gamma_{t_i}(s)\mid^2=\mid\nabla u(\gamma_{t_i}(s)\mid^2+u(\gamma_{t_i}(s))^2\geq \frac{C^2}{9}$. Then $\frac{\mid \gamma_{t_i}(s)\mid}{u^{\frac{\gamma}{2}}\f\gamma_{t_i}(s)\r}\rightarrow\infty$ as $t\rightarrow T$. Thus we get the contradiction.
\end{proof}

\begin{figure}
\caption{$\mathbb{P}^{2}_{t_i}\cap M_{t_i}$}
\includegraphics[width=0.5\textwidth,]{./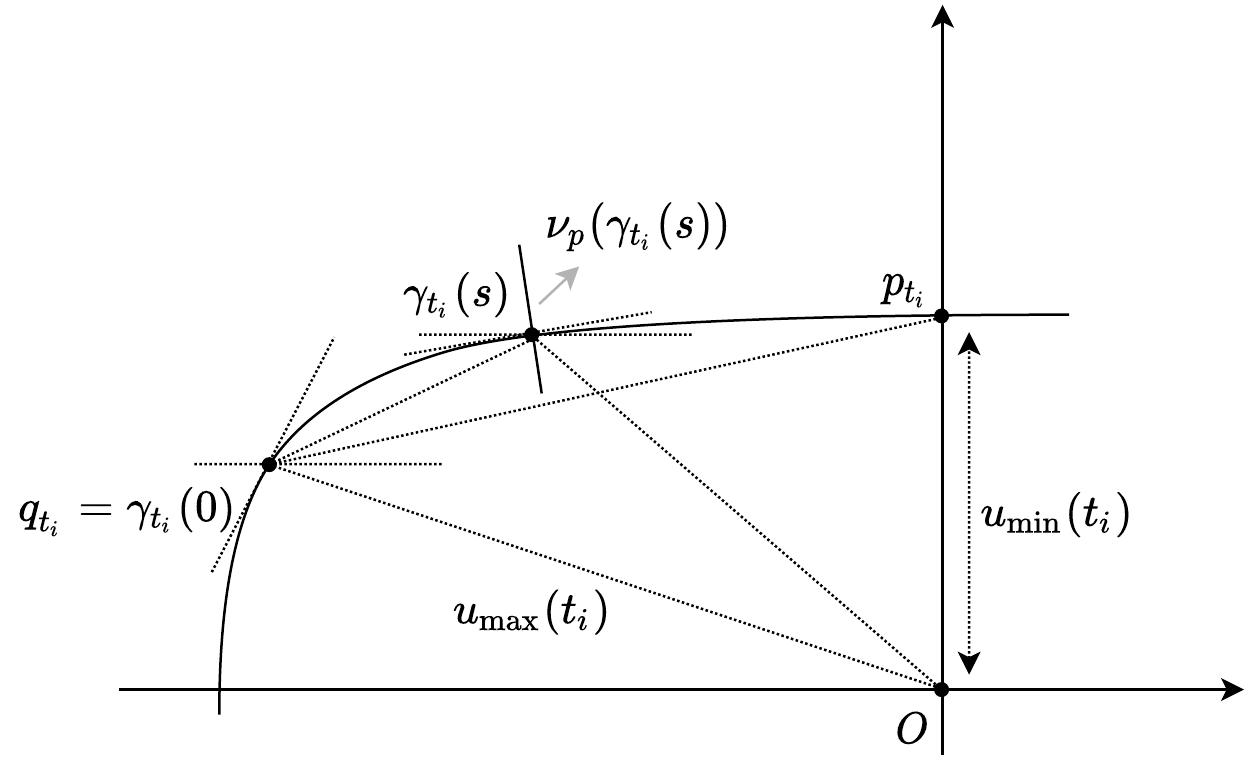}
\end{figure}

\section{Higher regularities}\label{sec:4}

\subsection{bounds of the speed function}
In this section, we will estimate the bounds of $\psi\s_k$ by using the method in \cite{Iva19}.
\begin{lem}\label{Phi l}
Under the normalised flow \eqref{s1:flow-n}, The speed function along the normal direction has a lower bound, i.e.
\begin{align*}
\psi\s_k^{\a}\geq C 
\end{align*}  
\end{lem}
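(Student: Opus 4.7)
The plan is to apply the parabolic maximum principle to the auxiliary function $W := \Phi/u = \psi\sigma_k^\alpha/u$, whose choice is motivated by the fact that stationary solutions of the normalised flow satisfy $\Phi = \eta u$, i.e., $W \equiv \eta$; this suggests $W$ should remain bounded above and below along the flow, and combined with the upper bound of $u$ from Corollary~\ref{upp}, a lower bound on $W$ translates directly into the desired lower bound on $\Phi = Wu$.

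Compute $(\partial_t - \mathcal{L})W$ using the product rule. At a spatial minimum of $W$, the gradient condition $\nabla W = 0$ forces $\nabla\Phi = W \nabla u$, which makes the two cross terms $\tfrac{2\psi\dot F^{ij}\Phi_i u_j}{u^2}$ and $\tfrac{2\Phi\psi\dot F^{ij}u_i u_j}{u^3}$ arising from expanding $\mathcal{L}(\Phi/u)$ cancel exactly. Substituting the evolution equations \eqref{ev-u} and \eqref{ev-speed} into the remaining expression and using $\psi F = Wu$, I expect a further cancellation of the two $\psi\sum_k \dot F^{kk}$ contributions, producing the clean differential inequality
\[
\frac{d W_{\min}(t)}{dt} \;\geq\; (k\alpha - 1)\, W_{\min}(t)\bigl(W_{\min}(t) - \eta(t)\bigr)
\]
at the spatial minimum.

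Since $\eta(t)$ is uniformly bounded above by Lemma~\ref{eta upp}, the right-hand side is bounded below by $-(k\alpha-1)\eta_{\max} W_{\min}$, and Gronwall's inequality then yields $W_{\min}(t) \geq W_{\min}(0)\exp\!\bigl(-(k\alpha-1)\eta_{\max} t\bigr) > 0$ on the existence interval; combined with the upper bound $u \leq C$ from Corollary~\ref{upp}, this gives $\Phi = Wu \geq c > 0$, as claimed. The main technical obstacle is verifying the two cancellations — the cross-term cancellation from $\nabla W = 0$ at the minimum, and the cancellation of the $\psi\sum_k \dot F^{kk}$ terms after substituting the two evolution equations — that reduce the maximum-principle computation to this simple first-order ODI. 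Once these are in place, the remainder is an elementary ODE comparison using the already established bounds on $\eta$; if anywhere a stronger time-uniform lower bound is needed later in the paper, the monotone quantity $\mathcal{J}(u) = \int W^{-1/\alpha}\, d\sigma$ from Lemma~\ref{mon dec} (which is precisely the integral of $W^{-1/\alpha}$ with respect to $d\sigma = u\sigma_k\, d\mu$) supplies the necessary additional global control.
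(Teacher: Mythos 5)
Your computation of the evolution equation for $W=\Phi/u$ is correct: the two $\psi\sum_k\dot F^{kk}$ contributions from \eqref{ev-u} and \eqref{ev-speed} do cancel, the cross terms collapse to $2\psi\dot F^{ij}\nabla_i(\Phi/u)\,u_j/u$ (vanishing at a critical point of $W$), and one is left with $(\partial_t-\mathcal{L})W=(1-k\alpha)W(\eta-W)$ at a spatial minimum. This matches exactly the first lines of the paper's proof of Lemma~\ref{Phi} (the upper bound). However, the resulting ODI does \emph{not} deliver what Lemma~\ref{Phi l} requires. Since $\alpha>1/k$ one has $(1-k\alpha)<0$, so the ODI reads $W_{\min}'\geq(k\alpha-1)W_{\min}(W_{\min}-\eta)$ with positive leading coefficient. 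When $W_{\min}<\eta$ the right-hand side is negative, and the best the Gronwall/comparison argument yields is $W_{\min}(t)\geq W_{\min}(0)\,e^{-(k\alpha-1)\eta_{\max}t}$, which decays to zero as $t\to\infty$. Since the normalised flow is to be run for all time and the lower bound on $\Phi$ is needed \emph{uniformly in $t$} to obtain the curvature estimates in Lemma~\ref{conv-c2} and the long-time existence in Corollary~\ref{long time}, a bound that degenerates exponentially in $t$ is useless here. In other words, the quadratic $W(W-\eta)$ has an attracting fixed point at $W=0$ from below, and nothing in the ODI prevents $W_{\min}$ from drifting toward it.

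The appeal to $\mathcal{J}(u)=\int W^{-1/\alpha}\,d\sigma$ at the end does not close this gap: monotonicity of $\mathcal{J}$ controls $W^{-1/\alpha}$ only in an $L^1(d\sigma)$ sense, which bounds $W$ from below in an averaged sense but not pointwise, so it cannot supply the missing uniform-in-time minimum principle. The paper avoids the problem by taking $\Psi=\log\Phi-\tfrac{A}{2}\rho^2$ rather than $\Phi/u$; the key structural difference is that after the maximum-principle manipulation, the favorable term $A\Phi\cdot\eta\rho^2/\Phi=A\eta\rho^2$ (see \eqref{low speed2}) remains bounded below by a fixed positive constant (using $\rho\geq u\geq u_{\min}$ from Theorem~\ref{lower bound}) and in fact \emph{dominates} the remaining terms precisely when $\Phi$ is small, giving a contradiction and hence a time-independent lower bound. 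That "positivity as $\Phi\to0$" feature is what your test function $W=\Phi/u$ lacks. The fix would be either to adopt the paper's $\rho^2$-weighted test function, or to modify $W$ by a similar lower-order term whose contribution stays bounded away from zero when $\Phi$ is small.
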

\begin{proof}
Consider an auxiliary function $\Psi=\log(\psi\s_k^{\a})-A\frac{\rho^2}{2}$,  with $A>0$ which is to be chosen later.  By \eqref{ev-r} and \eqref{ev-speed},  we obtain the evolution equation
\begin{equation}\label{low speed1}
\begin{aligned}
(\partial_t-\mathcal{L})\Psi&=\frac{(\partial t-\mathcal{L})\Phi}{\Phi}+\psi\frac{\dot{F}^{ij}\Phi_i\Phi_j}{\Phi^2}-A(\partial_t-\mathcal{L})\frac{\rho^2}{2}\\
&=-\eta k\a +\psi\sum_{i}\dot{f}^{i}+\psi\frac{\dot{f}^{i}\Phi_i^2}{\Phi^2}-A(F\psi_k u_k-\eta \rho^2-\psi\dot{f}^{i}\lambda_i^2+(1+k\a)\psi u F).
\end{aligned}
\end{equation}
%From lemma \ref{sk},  $\dot{F}^{ij}b_{ki}b_{kj}=\a\s_k^{\a-1}\dot{\s_k}^{ij}b_{ki}b_{kj}\geq c \s_k^{\a+\frac{1}{k}}$.
At the minimum point of $\Psi$,  we obtain
\begin{equation}\label{low speed2}
\begin{aligned}
0\geq(\partial_t-\mathcal{L})\Psi\geq\eta(A\frac{\rho^2}{2}-k\a)+A\Phi\f -\frac{\psi_l u_l}{\psi}-(1+k\a)u+\frac{\eta\rho^2}{e^{\Psi+A\frac{\rho^2}{2}}}\r,
\end{aligned}
\end{equation}
where we throw two positive terms $\psi\frac{\dot{f}^{i}\Phi_i^2}{\Phi^2}$ and $A\psi\dot{f}^{i}\lambda_i^2$ in \eqref{low speed1} due to $\dot{f}^i>0$. Now we can choose $A=\frac{3k\a}{u_{\min}^2}$, where $u_{\min}$ is the lower bound obtained in Theorem \ref{lower bound}. We observe that $u_k=\langle X,e_k\rangle\leq \max\Vert X\Vert=u_{\max}$ where $u_{\max}$ is the upper bound of $u$ obtained in Corollary \ref{upp}. Then we have
\begin{equation}
\begin{aligned}
0\geq\eta(A\frac{\rho^2}{2}-k \a)+A\Phi\f-\frac{\psi_l u_l}{\psi}-(1+k\a)u+\frac{\eta\rho^2}{e^{\Psi+A\frac{\rho^2}{2}}}\r\geq 0.
\end{aligned}
\end{equation}
Since we can assume $\Psi$ is far below the zero without loss of generality. Thus, the lower bound of the speed depends on the bounds of $u$ and $\eta$, $\psi_{\min}$ and $\|\psi\|_{C^1}$.
\end{proof}

\begin{lem}\label{Phi}
Under the normalised flow \eqref{s1:flow-n}, The speed function along the normal direction has an upper bound, i.e.
\begin{align*}
\psi\s_k^{\a}\leq C. 
\end{align*}  
\end{lem}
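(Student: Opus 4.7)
The plan is to run a maximum principle argument analogous to Lemma \ref{Phi l} but with the sign of the auxiliary term flipped. Motivated by the lower bound argument, I will consider
$$\Psi := \log\bigl(\psi\s_k^{\a}\bigr) + \frac{A}{2}\rho^2,$$
with a small constant $A>0$ to be chosen, and work at a spatial maximum of $\Psi$. Combining \eqref{ev-r} and \eqref{ev-speed} as in the proof of Lemma \ref{Phi l}, I get
$$(\partial_t-\mathcal{L})\Psi = -k\a\eta + \psi\sum_k\dot F^{kk} + \psi\dot F^{ij}\frac{\Phi_i\Phi_j}{\Phi^2} + A\bigl[F\psi_ku_k - \eta\rho^2 - \psi\dot F^{ij}b_{ki}b_{kj} + (1+k\a)\psi uF\bigr].$$
At the maximum the critical condition $\nabla\Psi = 0$ reads, in a principal frame diagonalising $b_{ij}$, $\Phi_i/\Phi = -Ab_{ii}u_i$, so the cross-term becomes $A^{2}\psi\dot F^{ii}b_{ii}^{2}u_i^{2}$.

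The first key step is to absorb this cross-term into the good negative term $-A\psi\dot F^{ii}b_{ii}^{2}$ coming from the $\rho^{2}$ evolution. By the refined gradient estimate in Lemma \ref{c1} together with Corollary \ref{upp}, $|\nabla u|^{2}\leq C$ uniformly, so choosing $A\leq 1/(2C)$ forces
$$A^{2}\psi\dot F^{ii}b_{ii}^{2}u_i^{2}-A\psi\dot F^{ii}b_{ii}^{2}\leq -\tfrac{A}{2}\psi\dot F^{ii}b_{ii}^{2}.$$
Feeding this back into the inequality and using $\|\psi\|_{C^{1}}$, Lemma \ref{eta low}, Lemma \ref{eta upp} and $u\leq C$ to handle the harmless terms, one reduces the problem to showing that the remaining positive quantities $\psi\sum_k\dot F^{kk}$ and $(1+k\a)A\psi uF$ are dominated by $\frac{A}{2}\psi\dot F^{ii}b_{ii}^{2}$ whenever $\Phi$ is large.

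For this I invoke Newton--Maclaurin identities. In the principal frame,
$$\psi\dot F^{ii}b_{ii}^{2} = \a\psi\s_k^{\a-1}\bigl(\s_1\s_k - (k+1)\s_{k+1}\bigr)\geq \tfrac{k\a}{n}\psi\,\s_1\,\s_k^{\a} = \tfrac{k\a}{n}\s_1\Phi,$$
where I used Newton's inequality $(k+1)\s_{k+1}\leq \frac{n-k}{n}\s_1\s_k$. Since $\Phi$ large forces $\s_k$ large (as $\psi$ is bounded below), Maclaurin's inequality $\s_1\geq n(\s_k/\binom{n}{k})^{1/k}$ makes $\s_1\to\infty$, and therefore $\frac{A}{4}\cdot\frac{k\a}{n}\s_1\Phi$ dominates $(1+k\a)Au\Phi$ once $\Phi$ exceeds a threshold depending only on $u_{\max}$, $n$, $k$, $\a$. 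A similar but finer comparison, using the Cauchy--Schwarz type lower bound $\sum\dot\s_k^i\lambda_i^{2}\cdot\sum\dot\s_k^i\geq k^{2}\s_k^{2}$ together with Maclaurin, shows $\psi\sum_k\dot F^{kk}$ is likewise absorbed into $\frac{A}{4}\psi\dot F^{ii}b_{ii}^{2}$ when $\Phi$ is large.

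Combining these estimates yields $(\partial_t-\mathcal{L})\Psi < 0$ at any spatial maximum where $\Phi$ exceeds a universal threshold; hence Hamilton's trick implies $\Psi_{\max}$ is non-increasing once it crosses that threshold, proving a uniform upper bound on $\Phi$ in terms of $\Psi(\cdot,0)$ and the universal constants. The main obstacle is precisely the control of $\psi\sum_k\dot F^{kk}$, which a priori grows like $\s_{k-1}$ and is not directly comparable to $\Phi$; the resolution is that when $\Phi$ is large the full Newton--Maclaurin chain forces $\s_1$ and $\dot F^{ii}b_{ii}^{2}$ to grow strictly faster, so the quadratic term wins. The lower bound $u\geq c>0$ of Theorem \ref{lower bound} and the gradient estimate of Lemma \ref{c1} are both used in an essential way to keep $A$ independent of time.
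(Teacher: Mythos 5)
Your proposed test function $\Psi=\log(\psi\s_k^\a)+\tfrac{A}{2}\rho^2$ differs from the paper's in a way that turns out to be fatal: the paper works with $\Psi=\frac{\Phi}{u(1-\epsilon\rho^2)}$, and the extra factor $u^{-1}$ is not cosmetic. Comparing \eqref{ev-speed} with \eqref{ev-u}, both $(\partial_t-\mathcal{L})\log\Phi$ and $(\partial_t-\mathcal{L})\log u$ contain the term $\psi\sum_k\dot F^{kk}$, and dividing by $u$ cancels it exactly. In your $\Psi$ this term survives; it equals $\a(n-k+1)\psi\s_k^{\a-1}\s_{k-1}$, and it is \emph{not} dominated by $\frac{A}{2}\psi\dot F^{ii}b_{ii}^{2}=\frac{A\a}{2}\psi\s_k^{\a-1}\bigl(\s_1\s_k-(k+1)\s_{k+1}\bigr)$ for small fixed $A$, no matter how large $\s_k$ is. Take $k=3$ and principal radii $\lambda=(L,L,L^{-1},\dots,L^{-1})$ with $L\to\infty$: then $\s_3\approx (n-2)L$ (so $\Phi$ is large), while $(n-2)\s_2\approx (n-2)L^2$ and $\s_1\s_3-4\s_4\approx 2(n-2)L^2$, so the ``bad'' term is exactly half the ``good'' term; absorption would need $A\geq 1$, contradicting the smallness of $A$ you fixed to kill the gradient cross-term $A^2\psi\dot F^{ii}b_{ii}^2u_i^2$. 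The Cauchy--Schwarz bound $\sum\dot\s_k^i\lambda_i^2\geq k^2\s_k^2/\sum\dot\s_k^i$ you invoke is far too lossy here (it gives $O(1)$ on this configuration), and Maclaurin only gives a \emph{lower} bound $\s_{k-1}\gtrsim\s_k^{(k-1)/k}$, not the upper bound you would need.

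The structural point is that at this stage of the proof you have no lower bound on $\lambda_{\min}$, so $\s_{k-1}/\s_k$ is uncontrolled; the "when $\Phi$ is large, the Newton--Maclaurin chain forces the quadratic term to win" heuristic fails precisely because $\s_{k-1}$ can grow at the same rate as $\s_1\s_k-(k+1)\s_{k+1}$. The paper sidesteps this entirely: working with $\Phi/u$ removes the dangerous $\sum\dot F^{kk}$ contribution, and then the $\rho^2$-correction supplies the strong negative term $-c\,F^{1+1/(k\a)}$ via the Newton--MacLaurin estimate \eqref{sk2}, which dominates the remaining $O(F)$ terms. To repair your argument you would have to include the $u$-cancellation, i.e., use $\log\Phi-\log u+\tfrac{A}{2}\rho^2$ or an equivalent multiplicative combination, at which point you would essentially reproduce the paper's proof.
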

\begin{proof}
First we consider the evolution of $\frac{\Phi}{u}$. By \eqref{ev-u} and \eqref{ev-speed}, we have
\begin{equation}
\begin{aligned}
(\partial_t-\mathcal{L})\frac{\Phi}{u}&=\frac{\Phi}{u}\f\frac{(\partial_t-\mathcal{L})\Phi}{\Phi}-\frac{(\partial_t-\mathcal{L})u}{u}\r+2\frac{\psi\dot{F}^{ij}\Phi_i u_j}{u^2}-2\frac{\psi\Phi\dot{F}^{ij}u_i u_j}{u^3}\\
&=\frac{\Phi}{u}\f(1-k\a)\eta -(1-k\a)\frac{\psi F}{u} \r+2\psi\dot{F}^{ij}(\frac{\Phi}{u})_i \frac{u_j}{u}.
\end{aligned}
\end{equation}
Choose $\epsilon=\frac{1}{ 2u_{\max}^2}$, where $u_{\max}$ is the upper bound in Corollary \ref{upp}.  We assume $\Psi=\frac{\Phi}{u(1-\epsilon\rho^2)}$.  At the maximal point of $\Psi$,
we have the critical equation 
\begin{equation}
\nabla(\frac{\Phi}{u})=-\epsilon\frac{\Phi}{u}\frac{\nabla\rho^2}{1-\epsilon \rho^2}.
\end{equation}
By \eqref{ev-r}, we have
\begin{equation}\label{speed-2}
\begin{aligned}
0&\leq(\partial_t-\mathcal{L})\Psi\\
&=\frac{\Phi}{u(1-\epsilon\rho^2)}(\frac{(\partial_t-\mathcal{L})\frac{\Phi}{u}}{\frac{\Phi}{u}}+\epsilon\frac{(\partial_t-\mathcal{L})\rho^2}{1-\epsilon\rho^2})\\
&=\frac{\Phi}{u(1-\epsilon\rho^2)}\f(1-k\a)\eta -(1-k\a)\frac{\psi F}{u}+2\dot{F}^{ij}(\frac{\Phi}{u})_i \frac{u_j}{F}+\epsilon\frac{F\psi_k u_k-\eta \rho^2-\psi\dot{F}^{ij}b_{ki}b_{kj}+(1+k\a)\psi u F}{1-\epsilon\rho^2}\r.
\end{aligned}
\end{equation}
Here $\dot{F}^{ij}b_{ki}b_{kj}$ can be written as $\sum_i \dot{f}^i\lambda_i{}^2$.
By the formula $\sum_i \frac{\partial \s_k}{\partial \lambda_i} \lambda_i^2 = \s_1 \s_k -(k+1)\s_{k+1}$ and Newton-MacLaurin inequalities, we have
\begin{equation}\label{sk2}
\begin{aligned}
\sum_i\dot{f}^i\lambda_i{}^2=&\sum_i \frac{\partial \s_k^{\a}}{\partial \lambda_i} \lambda_i^2 = \a\s_k^{\a-1}(\s_1\s_k -(k+1) \s_{k+1}) \\
=& \a\s_1\s_k^{\a} - \a(k+1) \s_k^{\a-1} \s_{k+1}\\
\geq& (C_n^k)^{-\frac{1}{k}} \a k \s_k^{\a + \frac{1}{k}}.
\end{aligned}
\end{equation} 
%\begin{align}\label{fij}
%\dot{F}^{ij}b_{ki}b_{kj}=\a\s_k^{\a-1}\dot{\s_k}^{ij}b_{ki}b_{kj}\geq c \s_k^{\a+\frac{1}{k}}
%\end{align}
Direct computation gives 
\begin{equation}\label{cri-speed}
\begin{aligned}
\psi\dot{F}^{ij}(\frac{\Phi}{u})_i u_j&=-\epsilon\psi\frac{\Phi}{u}\frac{\dot{F}^{ij}\rho^2_i u_j}{1-\epsilon\rho^2}\\
&=-\epsilon\psi\frac{\Phi}{u}\frac{\dot{F}^{ij}(2u_i u+2u_{li}u_l)u_j}{1-\epsilon\rho^2}\\
&=-\epsilon\Phi\frac{2\psi\dot{F}^{ii}b_{ii}u_i^2}{u(1-\epsilon\rho^2)}\\
&\leq 0.
\end{aligned}
\end{equation}
Inserting \eqref{sk2} and \eqref{cri-speed} into \eqref{speed-2},
\begin{align*}
0\leq&\frac{\Phi}{u(1-\epsilon\rho^2)} \f (1-k\a) \eta -(1-k\a)\frac{\psi F}{u}+\epsilon \frac{F \psi_k u_k-\eta \rho^2-\psi\dot{F}^{ij}b_{ki}b_{kj}+(1+k\a)\psi u F}{1-\epsilon\rho^2}\r\\
=&\frac{\Phi}{u(1-\epsilon\rho^2)} \f(1-k\a)\eta-\epsilon\frac{\eta\rho^2}{1-\epsilon\rho^2}\r\\
&+\frac{\Phi}{u(1-\epsilon\rho^2)} F\f \frac{\epsilon}{1-\epsilon\rho^2} \f(1+k\a) \psi u+\psi_k u_k \r-(1-k\a) \frac{\psi}{u} \r-\epsilon\frac{\Phi}{u(1-\epsilon\rho^2)}  \frac{\psi}{1-\epsilon\rho^2} \dot{F}^{ij}b_{ki}b_{kj}\\
&\leq -CF^{1+\frac{1}{\a k}}+cF+c.
\end{align*}
Thus F has an upper bound and we complete the proof.
\end{proof}

To obtain the bounds of principal  curvatures, we need the following Lemma (see Urbas \cite{U91}).
\begin{lem}\label{lem-inv-con}
	Denote $\{h^{ij} \}$ as the inverse matrix of $\{b_{ij}\}$, $G=\s_k^{\frac{1}{k}}(b_{ij})$. 
	
Then we have 
	\begin{align}
	(\ddot{G}^{pq,lm} + 2 \dot{G}^{pm}h^{ql})\eta_{pq} \eta_{lm} \geq 2 G^{-1} (\dot{G}^{pq} \eta_{pq})^2\label{f-inv-con}
	\end{align}
	for any tensor $\{\eta_{pq}\}$.
\end{lem}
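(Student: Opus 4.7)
The plan is to reduce the matrix inequality to its eigenvalue form and then split the resulting expression into a diagonal piece, which encodes the inverse concavity of $g(\l)=\s_k^{1/k}(\l)$ as a function of eigenvalues, and a manifestly nonnegative off-diagonal piece. Since $\dot G^{pq}$, $\ddot G^{pq,rs}$ and $h^{pq} = (b^{-1})^{pq}$ all transform covariantly under $b \mapsto O^T b O$ and $\eta \mapsto O^T \eta O$ for $O \in O(n)$, I may assume $b = \mathrm{diag}(\l_1,\dots,\l_n)$ with $\l_i > 0$. At such a diagonal $b$ one has $\dot G^{pq} = \dot g^p \d_{pq}$, $h^{pq} = \l_p^{-1}\d_{pq}$, and by the standard formula for the second derivative of a symmetric function of eigenvalues (cf. \cite{And07}),
\begin{align*}
\ddot G^{pq,rs}\eta_{pq}\eta_{rs} = \sum_{p,q} \ddot g^{pq}\eta_{pp}\eta_{qq} + 2\sum_{p\neq q}\frac{\dot g^p - \dot g^q}{\l_p - \l_q}\eta_{pq}^2 .
\end{align*}

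Substituting these identities into \eqref{f-inv-con}, the inequality separates into a diagonal statement
\begin{align*}
\sum_{p,q} \ddot g^{pq}\eta_{pp}\eta_{qq} + 2\sum_p \frac{\dot g^p}{\l_p}\eta_{pp}^2 \;\geq\; \frac{2}{g}\Bigl(\sum_p \dot g^p \eta_{pp}\Bigr)^2
\end{align*}
and, for each unordered pair $p \neq q$, an off-diagonal statement
\begin{align*}
\frac{\dot g^p - \dot g^q}{\l_p - \l_q} + \frac{\dot g^p}{\l_q} + \frac{\dot g^q}{\l_p} \;\geq\; 0.
\end{align*}
The off-diagonal piece I would prove directly: after clearing denominators it is equivalent to $(\l_p - \l_q)\bigl(\dot g^p \l_p^2 - \dot g^q \l_q^2\bigr) \geq 0$. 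Writing $\dot g^p = \tfrac{1}{k} g^{1-k}\s_{k-1}(\l|p)$ and using $\s_{k-1}(\l|p) = \s_{k-1}(\l|pq) + \l_q \s_{k-2}(\l|pq)$ yields
\begin{align*}
\l_p^2 \s_{k-1}(\l|p) - \l_q^2 \s_{k-1}(\l|q) = (\l_p - \l_q)\bigl[(\l_p + \l_q)\s_{k-1}(\l|pq) + \l_p\l_q \s_{k-2}(\l|pq)\bigr],
\end{align*}
which has the correct sign because every $\l_i > 0$.

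The main step, and the one I expect to be the most delicate, is the diagonal inequality. It is precisely the statement that $g = \s_k^{1/k}$ is \emph{inverse concave} on the positive cone $\{\l_i > 0\}$. Equivalently, the conjugate function $g^{\ast}(\mu) := 1/g(\mu_1^{-1},\dots,\mu_n^{-1}) = \bigl(\s_n(\mu)/\s_{n-k}(\mu)\bigr)^{1/k}$ is concave there, a classical fact stemming from the Alexandrov--Fenchel inequalities for elementary symmetric functions. Computing the Hessian of $g^{\ast}$ and converting back through the substitution $\mu_i = \l_i^{-1}$ matches, coefficient by coefficient, the two terms on the left side and the $2/g$ factor on the right side of the diagonal inequality. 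Combining the diagonal and off-diagonal estimates yields \eqref{f-inv-con}.
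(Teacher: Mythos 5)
Your proof is correct, but it takes a genuinely different route from the paper's. The paper works entirely at the matrix level: it sets $S(h^{ij}) = 1/G(b_{ij})$ with $\{h^{ij}\}$ the inverse of $\{b_{ij}\}$, notes that $S$ is concave as a function of the matrix $h$ (it equals $(\s_n/\s_{n-k})^{1/k}$ of the eigenvalues of $h$), and obtains \eqref{f-inv-con} in one pass by differentiating $S$ twice through the chain rule and rearranging, never diagonalizing. You instead diagonalize $b$, apply the Andrews spectral second-derivative formula to split the claim into a diagonal block and an off-diagonal block, dispatch the off-diagonal block with an elementary Newton-identity computation that uses no concavity whatsoever, and reduce the diagonal block to concavity of $g^*(\mu)=(\s_n/\s_{n-k})^{1/k}(\mu)$ at the eigenvalue level. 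What this buys is a transparent separation: pure algebra on the off-diagonal part, concavity isolated to the diagonal part, and no appeal (even implicit) to the spectral-function form of concavity; what it costs is the extra bookkeeping of diagonalization and the substitution $\mu_i=\l_i^{-1}$. Two remarks. First, a typo: the Andrews formula should carry $\sum_{p\neq q}$ (equivalently $2\sum_{p<q}$), not $2\sum_{p\neq q}$, on the $\frac{\dot g^p-\dot g^q}{\l_p-\l_q}\eta_{pq}^2$ term; the off-diagonal inequality you actually state and verify, $\frac{\dot g^p - \dot g^q}{\l_p - \l_q} + \frac{\dot g^p}{\l_q} + \frac{\dot g^q}{\l_p} \geq 0$, is the one consistent with the correct constant, so the argument stands. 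Second, your last sentence asserts the diagonal-block match without carrying it out; it does check out (with $\mu_i=\l_i^{-1}$ and $v_i=\l_i^{-2}\eta_{ii}$, the condition $\sum_{i,j}\partial_{\mu_i}\partial_{\mu_j}g^*\,v_iv_j \leq 0$ multiplied by $-g^2$ is exactly your diagonal inequality), but for a complete proof you should write it down.
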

\begin{proof}
	For convenience, we give a sketch of the proof here. Let $S(h^{ij}) = \frac{1}{G(b_{ij})}$. We have
	\begin{align*}
	\dot{S}^{ij} := \frac{\partial S}{\partial h^{ij}} = - G^{-2} \dot{G}^{pq} \frac{\partial b_{pq}}{ \partial h^{ij}} =
	G^{-2}\dot{G}^{pq}b_{pi} b_{jq}.
	\end{align*}
	Since $S$ is concave, given any tensor $\lbrace\tilde{\eta}^{ij}\rbrace$, we obtain
	\begin{align}
	0 \geq& S^{ij, \lambda \mu} \tilde{\eta}^{ij} \tilde{\eta}^{\lambda\mu} \nonumber\\
	=& \frac{\partial }{\partial b_{lm}}(G^{-2}\dot{G}^{pq}b_{pi} b_{jq}) \frac{\partial b_{lm}}{\partial h^{\lambda\mu}} \tilde{\eta}^{ij} \tilde{\eta}^{\lambda\mu} \nonumber\\
%---------------
=& -b_{l\lambda} b_{\mu m}(-2 G^{-3}\dot{G}^{lm} \dot{G}^{pq}b_{pi} b_{jq} + G^{-2}\ddot{G}^{pq,lm} b_{pi} b_{jq} + G^{-2}\dot{G}^{pq} \d_p{}^l \d_i{}^m b_{jq} \nonumber\\
&+ G^{-2}\dot{G}^{pq}b_{pi} \d_j{}^l \d_q{}^m)\tilde{\eta}^{ij} \tilde{\eta}^{\lambda\mu} \nonumber\\	
%----------
=& 2G^{-3}\dot{G}^{lm}\dot{G}^{pq}b_{pi} b_{jq} b_{l\lambda} b_{\mu m}\tilde{\eta}^{ij} \tilde{\eta}^{\lambda\mu} - G^{-2}\ddot{G}^{pq,lm} b_{pi} b_{jq} b_{l\lambda} b_{\mu m}\tilde{\eta}^{ij} \bar{\eta}^{\lambda\mu}\nonumber\\
&-2G^{-2} \dot{G}^{pq} b_{pi} b_{j\lambda}  b_{\mu q} \tilde{\eta}^{ij} \tilde{\eta}^{\lambda\mu} \nonumber
	\end{align}
where we used $\d_\lambda{}^m= h^{ml} b_{\lambda l}$ in the last equality of the above equation. By setting $\eta_{pm} = b_{pi}\tilde{\eta}^{ij} b_{jm}$, we have
	\begin{align*}
	0 \geq 2 G^{-3} \dot{G}^{pq}\dot{G}^{lm} \eta_{pq} \eta_{lm} - G^{-2} \ddot{G}^{pq,lm}\eta_{pq} \eta_{lm}- 2G^{-2}\dot{G}^{pq} h^{ml} \eta_{pm} \eta_{lq},
	\end{align*}
	which implies the statement.
	\end{proof}
\begin{lem}\label{conv-c2}
	Let $M_t$, $t\in[0,T)$, be a uniformly convex solution to the flow \eqref{s1:flow-n}. Assume $1\leq k\leq n-1$, $\a> 0$, and the positive smooth function $\psi$ on $\mathbb{S}^n$ satisfies $\nabla^2\psi+\psi g>0$. If the support function $u_t$ of the solution satisfies $\frac{1}{C}\leq u\leq C$, there exists a positive constant $C$ depending only on $\a$ and $M_0$, such that the principal curvatures of $X (\cdot, t)$ satisfy
	\begin{align*}
 \frac{1}{C}\leq\lambda_i(\cdot, t) \leq C , \quad \forall \ t \ \in [0,T) \ {\rm and} \ i=1,2,\cdots,n.
	\end{align*}
\end{lem}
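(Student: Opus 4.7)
The plan is to prove the two-sided bound on $\lambda_i$ by two separate maximum-principle arguments on carefully chosen auxiliary functions, one controlling the largest eigenvalue of $b_{ij}$ from above and another controlling the smallest from below. The hypotheses supply the $C^0$ bound $c\leq u\leq C$, and the gradient estimate (Lemma \ref{c1}), the bounds on $\eta$ (Lemmas \ref{eta low}--\ref{eta upp}), and the two-sided speed bound $c\leq \psi\sigma_k^\alpha\leq C$ (Lemmas \ref{Phi l}--\ref{Phi}) are already in hand. Since $\sigma_k\in[c,C]$ alone neither forbids a single $\lambda_i$ from degenerating nor from blowing up, the individual eigenvalues still require separate control.

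For the upper bound $\lambda_{\max}\leq C$, I would consider an auxiliary function of the form $W(x,t)=\log\lambda_{\max}(x,t)+\phi(u)$, where $\phi$ is tuned to generate favorable lower-order terms (e.g.\ $\phi(u)=-A\log(u-\beta)$ for some $\beta<u_{\min}$ and $A$ large). At an interior maximum point $(x_0,t_0)$, rotate to an orthonormal frame diagonalizing $b_{ij}$ so that $b_{11}=\lambda_{\max}$. The critical equation $\nabla W=0$ together with $(\partial_t-\mathcal{L})W\geq 0$, combined with the evolution \eqref{bij} for $b_{11}$ and \eqref{ev-u} for $u$, yields a pointwise estimate. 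The hypothesis $\nabla^2\psi+\psi g>0$ controls the $\psi$-derivative contribution $\psi_{11}F+(k\alpha+1)\psi F$ (forcing the combination to stay bounded below by $k\alpha\psi F$), while the genuine favorable weight comes from $-\sum_k\dot F^{kk}b_{11}-\eta b_{11}$, which dominates once $b_{11}$ is large. The obstruction is the term $\psi\ddot F^{pq,rs}b_{pq1}b_{rs1}$: for $F=\sigma_k^\alpha$ with $\alpha>1/k$, this carries a positive contribution from $\alpha(\alpha-1)\sigma_k^{\alpha-2}(\dot\sigma_k^{pq}b_{pq1})^2$ that is not obviously dominated by the negative Hessian part $\alpha\sigma_k^{\alpha-1}\ddot\sigma_k^{pq,rs}b_{pq1}b_{rs1}$.

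This is precisely where Lemma \ref{lem-inv-con} enters. Reparametrizing $F=G^{k\alpha}$ with $G=\sigma_k^{1/k}$ concave and expanding $\ddot F$ by the chain rule splits the term into a positive $(\dot G^{pq}b_{pq1})^2$ piece and the genuine concave $G^{k\alpha-1}\ddot G^{pq,rs}b_{pq1}b_{rs1}$ piece. Applying Lemma \ref{lem-inv-con} to the latter replaces it with a negative multiple of $G^{-1}(\dot G^{pq}b_{pq1})^2$ plus a term $\dot G^{pm}h^{ql}b_{pq1}b_{lm1}$ weighted by the inverse matrix. The former cancels or dominates the positive chain-rule contribution; the latter is of order $\lambda_{\max}$ and is absorbed into the favorable $-\sum_k\dot F^{kk}b_{11}$ term once $b_{11}$ is large. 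This closes the contradiction and yields $\lambda_{\max}\leq C$. The lower bound $\lambda_{\min}\geq c$ follows by an analogous argument applied to the largest eigenvalue of the inverse matrix $h^{ij}=(b_{ij})^{-1}$, whose evolution is derived from \eqref{bij} via $\partial_t h^{ij}=-h^{ip}(\partial_t b_{pq})h^{qj}$; the dual concavity supplied by Lemma \ref{lem-inv-con} tames the analogous rogue term in the same way.

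The chief obstacle throughout is the non-concavity of $F=\sigma_k^\alpha$ when $\alpha>1/k$, which defeats the standard Andrews-type curvature maximum principle argument. Lemma \ref{lem-inv-con} is the substitute, supplying a quantitative dual-concavity inequality for $G=\sigma_k^{1/k}$ that, combined with the critical-point relations for $W$ and the two-sided speed bounds, allows the otherwise uncontrolled second-order gradient term to be absorbed into favorable lower-order terms. Once this reparametrization and the auxiliary function are committed to, the remaining work is careful bookkeeping of constants and signs.
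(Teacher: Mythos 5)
Your treatment of the lower bound on $\lambda_{\min}$ via the inverse matrix $h^{11} = (b_{11})^{-1}$ is essentially the paper's argument: in the evolution of $h^{11}$ the term $\psi\ddot F^{pq,rs}b_{pq1}b_{rs1}$ appears with a minus sign, and an extra term $-2\psi(h^{11})^2\dot F^{ij}h^{pq}\nabla_i b_{1p}\nabla_j b_{1q}$ arises from the chain rule in $\nabla^2 h^{11}$; Lemma \ref{lem-inv-con} (applied to $G = \sigma_k^{1/k}$) combines these two into a sign-definite quantity, and the lower bound on $\lambda_{\min}$ follows. This part is sound.

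The upper bound on $\lambda_{\max}$, however, does not work as you describe, and this is a genuine gap. You propose a direct maximum principle on $W = \log\lambda_{\max} + \phi(u)$ and attempt to control $+\psi\ddot F^{pq,rs}b_{pq1}b_{rs1}$ in $(\partial_t - \mathcal{L})b_{11}$ by Lemma \ref{lem-inv-con}. But the lemma gives a \emph{lower} bound on $\ddot G^{pq,lm}\eta_{pq}\eta_{lm}$ (together with the $h$-weighted correction term), which is the wrong direction here: since $\ddot F\eta\eta$ appears with a \emph{plus} sign in the $b_{11}$ evolution, you would need an \emph{upper} bound on $\ddot G\eta\eta$ to bound $(\partial_t - \mathcal{L})b_{11}$ from above. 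Moreover, the leftover term $\dot G^{pm}h^{ql}b_{pq1}b_{lm1}$ that the lemma introduces is \emph{not} of order $\lambda_{\max}$: $b_{pq1} = \nabla_1 b_{pq}$ are third derivatives of $u$, uncontrolled at this stage, so it cannot be ``absorbed into'' the zeroth-order favorable term $-\sum_k\dot F^{kk}b_{11}$. The reason the analogous absorption works in the $h^{11}$ argument is that the term $\dot F^{ij}h^{pq}\nabla_i b_{1p}\nabla_j b_{1q}$ arises \emph{there} intrinsically from the chain rule in $\mathcal{L}h^{11}$ and so is genuinely present in the evolution equation, exactly matching the correction term in Lemma \ref{lem-inv-con}; no such term exists when you differentiate $b_{11}$ directly.

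What you are missing is that once $\lambda_{\min} \geq c'$ is established via the $h^{11}$ argument, the upper bound on $\lambda_{\max}$ is purely algebraic and requires no second maximum principle. Indeed, from Lemma \ref{Phi} we have $\sigma_k \leq C$, and the expansion
\begin{equation*}
\sigma_k(\lambda) = \lambda_{\max}\,\sigma_{k-1}(\lambda\,|\,\lambda_{\max}) + \sigma_k(\lambda\,|\,\lambda_{\max}) \geq C_{n-1}^{k-1}\,\lambda_{\min}^{k-1}\,\lambda_{\max} \geq C_{n-1}^{k-1}\,(c')^{k-1}\,\lambda_{\max}
\end{equation*}
immediately yields $\lambda_{\max} \leq C$. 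Your opening remark that ``$\sigma_k \in [c,C]$ alone neither forbids a single $\lambda_i$ from degenerating nor from blowing up'' is correct, but it overlooks that the \emph{combination} of $\sigma_k \leq C$ with $\lambda_{\min} \geq c'$ does force $\lambda_{\max}$ to be bounded, which is why the paper only needs the one maximum-principle argument.
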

\begin{proof}
  Let $\k_{\max}$ be the maximal principal curvature. Denote $Q'=\frac{\k_{\max}}{u}$. Assume $Q'$ attains its maximum at $(x_0,t_0)$. Let $\{h^{ij}\}$ be the inverse matrix of $\{b_{ij}\}$. We choose a normal coordinate system near $x_0\in \mathbb{S}^n$ diagonalizing $\{b_{ij}\}$. Further, after rotating the frame we can assume $\partial_1 |_{(x_0,t_0)}$ is an eigenvector with respect to the minimal eigenvalue $\lambda_{\min}=\frac{1}{\k_{\max}}$. 
 Assume the auxiliary function $Q=\frac{h^{11}}{u}$.
 Clearly, $Q$ attains its maximum at $(x_0,t_0)$. Then
	\begin{equation}\label{par t tilde h}
	\partial_t h^{11}=-( h^{11}) ^2\partial_t b_{11},
	\end{equation}
\begin{align*}
		\nabla_i h^{11} =& \frac{\partial h^{11}}{\partial b_{pq}} \nabla_i b_{pq}
	= - h^{1p} b_{1q} \nabla_i b_{pq}
	= - (h^{11})^2 \nabla_i b_{11}
\end{align*}
 and
\begin{equation}\label{2 deriv tilde h}
	\begin{aligned}
	\nabla_j \nabla_i h^{11} =& \nabla_j (- h^{1p} h^{q1} \nabla_i b_{pq}) \\
	=& -\nabla_j h^{1p} h^{1q} \nabla_i b_{pq} - h^{1p}  \nabla_j h^{1q} \nabla_i b_{pq} - h^{1p} h^{1q} \nabla_j \nabla_i b_{pq}\\
	 =& h^{1r} h^{1p} h^{q1} \nabla_j b{1r}  \nabla_i b_{pq} + h^{1p} h^{r1} h^{qs} \nabla_j b_{sr}\nabla_i b_{pq}- h^{1p} h^{q1} \nabla_j \nabla_i b_{pq} \\
	=& - (h^{11})^2 \nabla_j \nabla_i b_{11} + 2 (h^{11})^2 h^{pl} \nabla_i b_{1p} \nabla_j b_{1l}.
	\end{aligned}
\end{equation}
By using \eqref{par t tilde h} and \eqref{2 deriv tilde h}, we have
\begin{equation}\label{L tilde h 11}
	\begin{aligned}
	(\partial_t-\mathcal{L})h^{11}=&\partial_t h^{11}-\psi \dot{F}^{ij}\nabla_j\nabla_i h^{11}\\
	=& -( h^{11} )^2\partial_t b_{11}+ \psi(h^{11})^2\dot{F}^{ij} \nabla_j \nabla_i b_{11} -2\psi (h^{11})^2  \dot{F}^{ij} h^{pq} \nabla_i b_{1p} \nabla_j b_{1q}\\
	=&-( h^{11} )^2(\partial_t-\mathcal{L} )b_{11}-2 \psi(h^{11})^2 \dot{F}^{ij} h^{pq} \nabla_i b_{1p} \nabla_j b_{1q}.
	\end{aligned}
\end{equation}
Inserting \eqref{bij} into \eqref{L tilde h 11}, we obtain
\begin{equation}\label{h11}
\begin{aligned}
	(\partial_t-\mathcal{L})h^{11}
&=-(h^{11})^2\f\psi_{11}F+2\psi_1 F_1+\psi\ddot{F}^{pq,rs}b_{pq1}b_{rs1}+(k\a+1)\psi F\r+\psi h^{11} \sum\limits_k\dot{F}^{kk}+\eta h^{11}\\
&-2 \psi(h^{11})^2 \dot{F}^{ij} h^{pq} \nabla_i b_{1p} \nabla_j b_{1q}.
\end{aligned}
\end{equation}
Denote $G=\s_k^{\frac{1}{k}}$ and $F=G^{k\a}$. Then 
$$
\dot{F}^{pq}=k\a G^{k\a-1}\dot{G}^{pq},
$$
$$
\ddot{F}^{pq,rs}=k\a G^{k\a-1}\ddot{G}^{pq,rs}+k\a (k\a-1) G^{k\a-2}\dot{G}^{pq}\dot{G}^{rs}.
$$
We calculate at the maximal point of $Q$.  Combining \eqref{ev-u}, we get
\begin{equation}\label{tildeh}	
	\begin{aligned}
	(\partial_t-\mathcal{L})\frac{h^{11}}{u}
&=-\frac{(h^{11})^2}{u}\f\psi_{11}F+2\psi_1 F_1+k\a\psi G^{k\a-1}\ddot{G}^{pq,rs}b_{pq1}b_{rs1}+k\a \psi(k\a-1) G^{k\a-2}(\nabla_1 G)^2+(k\a+1)\psi F\r\\
&+k\a\frac{h^{11}}{u} \psi G^{k\a-1}\sum\limits_k\dot{G}^{kk}+\eta \frac{h^{11}}{u}-2 k\a\psi(h^{11})^2  G^{k\a-1}\dot{G}^{ij} h^{pq} \nabla_i b_{1p} \nabla_j b_{1q}\\
&-(1-k\a)\frac{h^{11}}{u^2}\psi F+\eta \frac{h^{11}}{u}-k\a\psi\frac{h^{11}}{u} G^{k\a-1}\sum\limits_i\dot{G}^{ii}.
	\end{aligned}
\end{equation}	
Substitute $\eta^{ij} $ with $\nabla_1 h^{ij}$ in Lemma \ref{lem-inv-con}. By the Codazzi equation, we have
\begin{equation}\label{use inv-con}
\begin{aligned}
k\a\psi G^{k\a-1}(h^{11})^2\ddot{G}^{pq,rs}b_{pq1}b_{rs1}
+2  k\a\psi G^{k\a-1}(h^{11})^2\dot{G}^{ij} h^{pq} b_{1pi} b_{1qj}
\geq
2k\a \psi G^{k\a-2}(h^{11})^2 \f \nabla_1 G \r^2.
\end{aligned}
\end{equation}	
Inserting \eqref{use inv-con} into \eqref{tildeh}, we obtain
\begin{equation}\label{tildeh2}
	\begin{aligned}
(\partial_t-\mathcal{L})\frac{h^{11}}{u}
\leq&-\frac{(h^{11})^2}{u}\f\psi_{11}F+2\psi_1 F_1+k\a \psi(k\a+1) G^{k\a-2}(\nabla_1 G)^2+(k\a+1)\psi F\r\\
& +2\eta \frac{h_1^{1}}{u}-(1-k\a)\frac{h^{11}}{u^2}\psi F\\
\leq& -\frac{(h^{11})^2}{u}\f\psi_{11}F-\frac{k\a}{k\a+1}F\frac{(\psi_1)^2}{\psi}+(k\a+1)\psi F\r+2\eta \frac{h_1^{1}}{u}-(1-k\a)\frac{h^{11}}{u^2}\psi F,
	\end{aligned}
\end{equation}
where we use the inequality, $2\psi_1 F_1\leq k\a \psi(k\a+1) G^{k\a-2}(\nabla_1 G)^2+\frac{k\a}{k\a+1}F\frac{(\psi_1)^2}{\psi}$.
By using $\nabla_i \nabla_j\psi^{\frac{1}{k\a+1}}+\psi^{\frac{1}{k\a+1}}\delta_{ij}> 0$, we have 
\begin{align}\label{con}
\frac{1}{k\a+1} \nabla_1 \nabla_1\psi+\frac{1}{k\a+1}(\frac{1}{k\a+1}-1)\frac{1}{\psi}(\nabla_1 \psi)^2+\psi > 0.
\end{align}
Inserting \eqref{con} into \eqref{tildeh2}, by Lemma \ref{eta upp}, Corollary \ref{lower bound} and Lemma \ref{Phi}, there exists $c>0$ such that
\begin{align}
(\partial_t-\mathcal{L})\frac{h^{11}}{u}
\leq -c \f\frac{h^{11}}{u}\r^2+c \frac{h_1^{1}}{u}.
	\end{align}

%we have from \eqref{tildeh2}
%\begin{equation}\label{tildeh3}
%	\begin{aligned}
%	\mathcal{L}h^{11}\leq& \f\frac{k}{\b}-1\r\Phi+\gammah^{11}-\frac{\a}{\b}r^{-1}\Phi(h^{11})^2\nabla_1\nabla_1 r-\frac{\a\f \a-\b-k\r }{\b \f \b+k \r } r^{-2}\Phi(h^{11})^2\f \nabla_1r\r ^2\\
%	\leq& \f\frac{k}{\b}-1\r\Phi+\gammah^{11}-\frac{\a}{\b}r^{-1}\Phi(h^{11})^2\nabla_1\nabla_1 r,
%	\end{aligned}
%	\end{equation}
%	where we used  $\dot{G}^{ii} >0$.
%From \eqref{ri} and \eqref{rij}, we have
%\begin{align}\label{hess11 r}
%1- (\nabla_1 r)^2 \geq \frac{u^2}{r^2},
%\quad
%\nabla_1 \nabla_1 r=\frac{1}{r}-\frac{u}{r}h^{11}-\frac{\f \nabla_1 r \r ^2}{r}
%\geq \frac{u^2}{r^3} - \frac{u}{r} h^{11}.
%\end{align}
%Inserting \eqref{hess11 r} into \eqref{tildeh3}, we have
%\begin{align*}
%	\mathcal{L}h^{11}\leq &\f\frac{k}{\b}-1\r\Phi+\f \gamma+\frac{\a}{\b}ur^{-2}\Phi \r h^{11}-\frac{\a}{\b} u^2 r^{-4}\Phi(h^{11})^2.
%\end{align*}
%From Lemma \ref{C0-est}, Lemma \ref{c1} and Corollary \ref{cor Phi}, we derive that %where we derive the maximum  principal radii on $M_{t_0}$,we have
%	\begin{align*}
%	\frac{\partial }{\partial t}h^{11}\leq -c_1(h^{11})^2+c_2h^{11}+c_3.
%\end{align*}
%for some constants $c_1$, $c_2$, $c_3$ at $(x_0,t_0)$.
Hence $h^{11}(x,t)$ has a uniform upper bound, which means that the principal radii are bounded from below by a positive constant $c'$. Thereafter, by Lemma \ref{Phi}, we have
\begin{align*}
C\geq \s_k=&\lambda_{\max}\s_{k-1}(\lambda|\lambda_{\max})+\s_k(\lambda|\lambda_{\max})\\
\geq& C_{n-1}^{k-1}\lambda_{\min}^{k-1}\lambda_{\max}\\
\geq& C_{n-1}^{k-1}(c')^{k-1}\lambda_{\max}
\end{align*}
for some constant $C$.
%Thus there exists a positive constant $C$ such that $\lambda_{\max}\leq C$. It is evident to see that the theorem holds.
Therefore, the principal radii of curvature are bounded from above and below along the normalised flow. This completes the proof of Lemma \ref{conv-c2}.
\end{proof}

 We prove the uniformly parabolic of \eqref{u1:flow-n} in Section \ref{sec:4}, which implies the short time existence of the flow \eqref{s1:flow-n}.  Besides, in Section \ref{sec:3} we obtain the upper lower bounds of $u$ and the gradient estimates along the normalised flow \eqref{s1:flow-n} and the bounds of principal curvatures in Section \ref{sec:4} under the assumptions of Theorem \ref{main-thm-1}. The H$\ddot{\text{o}}$lder estimate of Krylov-Evans \cite{Kryl82} and the parabolic Schauder theory \cite{Lie96} can be applied to derive the higher order derivative estimates of the solution to the flow. Thus, we get the following results
\begin{cor}\label{long time}
Under the assumptions of Theorem \ref{main-thm-1}, the smooth solution of the normalised flow \eqref{s1:flow-n} exists for all time $t\in[0,\infty)$. There exists a constant $C_{\lambda,m}>0$ depending on $\lambda$, $m$, $\a$, $\psi$ and the geometry of $\mathcal{M}_0$, such that 
\begin{align*}
\|u\|_{C^{\lambda,m}}\leq C_{\lambda,m}.
\end{align*}
\end{cor}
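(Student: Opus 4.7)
The plan is to combine the a priori estimates from Sections~\ref{sec:3} and~\ref{sec:4} with standard parabolic regularity theory and a continuation argument. First I would invoke short-time existence: written as a scalar equation for the support function, \eqref{u1:flow-n} reads
\begin{equation*}
\partial_t u = \psi(x)\,\s_k^{\a}(\nabla^2 u + u I) - \eta(t)\, u,
\end{equation*}
and whenever $W_u = \nabla^2 u + u I$ is positive definite, its linearisation in $\nabla^2 u$ is the elliptic operator $\psi\,\dot{F}^{ij}\nabla_i\nabla_j$. Since $u_0$ is smooth and uniformly convex, classical parabolic theory produces a unique smooth uniformly convex solution on some maximal interval $[0, T^*)$.

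Next, I would assemble the uniform estimates already in hand. Corollary~\ref{upp} and Theorem~\ref{lower bound} give $0 < c \le u \le C$; Lemma~\ref{c1} gives $|\nabla u|^2 \le C u^{\gamma}$, so $\|\nabla u\|_{\infty}$ is bounded; Lemmas~\ref{Phi l}--\ref{Phi} bound the speed $\psi \s_k^{\a}$ from above and below; and Lemma~\ref{conv-c2} pinches every principal radius between two positive constants. Consequently $W_u$ remains in a fixed compact subset of the positive cone on $[0, T^*)$, so the equation is \emph{uniformly} parabolic with ellipticity constants independent of time.

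The main obstacle is then the passage from $C^{2}$ bounds to $C^{2,\lambda}$ bounds, because $\s_k^{\a}$ is not concave in $W_u$ when $\a > 1/k$. The remedy is that its $k\a$-th root $\s_k^{1/k}$ \emph{is} concave on the positive cone, so the equation may be rewritten as
\begin{equation*}
\bigl(\psi^{-1}(\partial_t u + \eta u)\bigr)^{1/(k\a)} = \s_k^{1/k}(\nabla^2 u + u I),
\end{equation*}
whose right-hand side is a concave, uniformly elliptic operator in $\nabla^2 u$. The Krylov--Evans theorem \cite{Kryl82} then yields a uniform $C^{2,\lambda}$ estimate for some $\lambda \in (0,1)$. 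With Hölder second derivatives in hand, the linearised equation has Hölder coefficients, and the parabolic Schauder theory \cite{Lie96} bootstraps this to uniform $C^{m,\lambda}$ bounds for every $m$, depending only on $\lambda$, $m$, $\a$, $\psi$, and $\mc{M}_0$.

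Finally, these uniform bounds rule out finite-time blow-up: if $T^* < \infty$, the $C^{m,\lambda}$ estimates would allow us to pass to a smooth uniformly convex limit at $t = T^*$ and restart the flow, contradicting maximality. Hence $T^* = \infty$, which together with the uniform $C^{m,\lambda}$ bounds finishes the proof.
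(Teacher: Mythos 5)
Your proposal is correct and follows the same route as the paper: assemble the uniform $C^0$, $C^1$ and $C^2$ estimates from Sections~\ref{sec:3} and~\ref{sec:4} (Corollary~\ref{upp}, Theorem~\ref{lower bound}, Lemma~\ref{c1}, Lemmas~\ref{Phi l}--\ref{Phi}, Lemma~\ref{conv-c2}), then invoke Krylov--Evans and parabolic Schauder theory to bootstrap to all orders, and conclude $T^*=\infty$ by a continuation argument. A nice touch is that you make explicit the rewriting in terms of $\s_k^{1/k}$ to obtain a concave operator, a step the paper leaves implicit but which is precisely what is needed to apply Krylov--Evans when $\a>1/k$ (where $\s_k^{\a}$ itself is not concave in $W_u$).
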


\section{Proof of Theorem \ref{main-thm-1}}\label{sec:5}

Due to Lemma \ref{mon dec}, the monotone formula $\mathcal{J}(u)$ is non-increasing, i.e. $\partial_t\mathcal{J}(u)\leq 0$ and the equality holds if and only if $u$ satisfies Eq. \eqref{ellip}. The solution of the flow will smoothly converge to a solution to Eq. \eqref{ellip} in a subsequence due to Corollary \ref{long time}. The full sequence convergence follows from the uniqueness of the $L^p$ Christoffel-Minkowski problem obtained in \cite{HMS04}. Thus we complete the proof of Theorem \ref{main-thm-1}.\qed

\begin{bibdiv}
\begin{biblist} 
\bibliographystyle{amsplain}
%-------------------------------------------------

\bib{A39}{article}{
	AUTHOR = {Alexandroff, A.},
	TITLE = {\"{U}ber die {O}berfl\"{a}chenfunktion eines konvexen {K}\"{o}rpers.
	({B}emerkung zur {A}rbeit ``{Z}ur {T}heorie der gemischten
	{V}olumina von konvexen {K}\"{o}rpern'')},
	JOURNAL = {Rec. Math. N.S. [Mat. Sbornik]},
	VOLUME = {6(48)},
	YEAR = {1939},
	PAGES = {167--174},
}

\bib{A42}{article}{
	AUTHOR = {Alexandroff, A.},
	TITLE = {Smoothness of the convex surface of bounded {G}aussian
	curvature},
	JOURNAL = {C. R. (Doklady) Acad. Sci. URSS (N.S.)},
	VOLUME = {36},
	YEAR = {1942},
	PAGES = {195--199},
}

\bib{A58}{article}{
	AUTHOR = {Aleksandrov, A. D.},
	TITLE = {{D}irichlet's problem for the equation {${\rm Det}\,||z_{ij}||
	=\varphi  (z_{1},\cdots,z_{n},z, x_{1},\cdots,
	x_{n})$}. {I}},
	JOURNAL = {Vestnik Leningrad. Univ. Ser. Mat. Meh. Astr.},
	VOLUME = {13},
	YEAR = {1958},
	NUMBER = {1},
	PAGES = {5--24},
}

\bib{And98}{article}{
    AUTHOR = {Andrews, Ben},
     TITLE = {Evolving convex curves},
   JOURNAL = {Calc. Var. Partial Differential Equations},
    VOLUME = {7},
      YEAR = {1998},
    NUMBER = {4},
     PAGES = {315--371},
}
\bib{And07}{article}{
	AUTHOR = {Andrews, Ben},
TITLE = {Pinching estimates and motion of hypersurfaces by curvature
	functions},
JOURNAL = {J. Reine Angew. Math.},
VOLUME = {608},
YEAR = {2007},
PAGES = {17--33},
}

\bib{BIS21}{article}{
	title={Christoffel-Minkowski flows},
	author={Paul Bryan},
	author={Mohammad N. Ivaki}, 
	author={Julian Scheuer},
	eprint={arXiv: 2005.14680},
}

\bib{C58}{article}{
	AUTHOR = {Calabi, Eugenio},
	TITLE = {Improper affine hyperspheres of convex type and a
	generalization of a theorem by {K}. {J}\"{o}rgens},
	JOURNAL = {Michigan Math. J.},
	VOLUME = {5},
	YEAR = {1958},
	PAGES = {105--126},
	ISSN = {0026-2285},
}

\bib{CY76}{article}{
	AUTHOR = {Cheng, Shiu Yuen},
	AUTHOR = {Yau, Shing Tung},
	TITLE = {On the regularity of the solution of the {$n$}-dimensional
	{M}inkowski problem},
	JOURNAL = {Comm. Pure Appl. Math.},
	VOLUME = {29},
	YEAR = {1976},
	NUMBER = {5},
	PAGES = {495--516},
	ISSN = {0010-3640},
}

\bib{CW06}{article}{
    author={Chou, Kai-Seng},
    author={Wang, Xu-Jia},
    TITLE = {The {$L_p$}-{M}inkowski problem and the {M}inkowski problem in
    	centroaffine geometry},
    JOURNAL = {Adv. Math.},
    VOLUME = {205},
    YEAR = {2006},
    NUMBER = {1},
    PAGES = {33--83},
} 

\bib{CG07}{article}{
    AUTHOR = {Chow, Bennett}, 
    AUTHOR = {Gulliver, Robert},
     TITLE = {Aleksandrov reflection and nonlinear evolution equations. {I}.
              {T}he {$n$}-sphere and {$n$}-ball},
   JOURNAL = {Calc. Var. Partial Differential Equations},
    VOLUME = {4},
      YEAR = {1996},
    NUMBER = {3},
     PAGES = {249--264},
}

\bib{DL22}{article}{,
	author={Ding, Shanwei}, author={Li, Guanghan},
	title={A class of inverse curvature flows and {$L^p$} dual {C}hristoffel-{M}inkowski problem}, 
	JOURNAL = {Trans. Amer. Math. Soc.},
	Doi={10.1090/tran/8793},
	year={2022},
}

\bib{F62}{article}{
    author={Firey, Wm. J.},
    TITLE = {{$p$}-means of convex bodies},
    JOURNAL = {Math. Scand.},
    VOLUME = {10},
    YEAR = {1962},
    PAGES = {17--24},
}

    \bib{Ger90}{article}{
AUTHOR = {Gerhardt, Claus},
     TITLE = {Flow of nonconvex hypersurfaces into spheres},
   JOURNAL = {J. Differential Geom.},
    VOLUME = {32},
      YEAR = {1990},
    NUMBER = {1},
     PAGES = {299--314},
}

\bib{Ger14}{article}{
    AUTHOR = {Gerhardt, Claus},
     TITLE = {Non-scale-invariant inverse curvature flows in {E}uclidean
              space},
   JOURNAL = {Calc. Var. Partial Differential Equations},
    VOLUME = {49},
      YEAR = {2014},
    NUMBER = {1-2},
     PAGES = {471--489},
}

\bib{GM03}{article}{
    author={Guan, Pengfei},
    author={Ma, Xi-Nan},
    title={The Christoffel-Minkowski problem. I. Convexity of solutions of a
   	       Hessian equation},
    journal={Invent. Math.},
    volume={151},
    date={2003},
    number={3},
    pages={553--577},
}

\bib{GMTZ10}{article}{
    AUTHOR = {Guan, Pengfei},
    AUTHOR = {Ma, Xi-Nan},
     AUTHOR = {Trudinger, Neil},
      AUTHOR = {Zhu, Xiaohua},
     TITLE = {A form of {A}lexandrov-{F}enchel inequality},
   JOURNAL = {Pure Appl. Math. Q.},
    VOLUME = {6},
      YEAR = {2010},
    NUMBER = {4, Special Issue: In honor of Joseph J. Kohn. Part 2},
     PAGES = {999--1012},
     }

\bib{GX18}{article}{
    AUTHOR = {Guan, Pengfei},
    AUTHOR = {Xia, Chao},
     TITLE = {{$L^p$} {C}hristoffel-{M}inkowski problem: the case
              {$1<p<k+1$}},
   JOURNAL = {Calc. Var. Partial Differential Equations},
    VOLUME = {57},
      YEAR = {2018},
    NUMBER = {2},
     PAGES = {Paper No. 69, 23},
}

\bib{H59}{article}{
	AUTHOR = {Heinz, Erhard},
	TITLE = {On elliptic {M}onge-{A}mp\`ere equations and {W}eyl's embedding
	problem},
	JOURNAL = {J. Analyse Math.},
	VOLUME = {7},
	YEAR = {1959},
	PAGES = {1--52},
	ISSN = {0021-7670},
}

\bib{HMS04}{article}{
     AUTHOR = {Hu, Changqing},
     AUTHOR = {Ma, Xi-Nan},  
     AUTHOR = {Shen, Chunli},
     TITLE = {On the {C}hristoffel-{M}inkowski problem of {F}irey's
              {$p$}-sum},
   JOURNAL = {Calc. Var. Partial Differential Equations},
    VOLUME = {21},
      YEAR = {2004},
    NUMBER = {2},
     PAGES = {137--155},
}

\bib{Iva19}{article}{
    AUTHOR = {Ivaki, Mohammad N.},
     TITLE = {Deforming a hypersurface by principal radii of curvature and
              support function},
   JOURNAL = {Calc. Var. Partial Differential Equations},
    VOLUME = {58},
      YEAR = {2019},
    NUMBER = {1},
     PAGES = {Paper No. 1, 18},
 }

\bib{Kryl82}{article}{
    AUTHOR = {Krylov, N. V.},
     TITLE = {Boundedly inhomogeneous elliptic and parabolic equations},
   JOURNAL = {Izv. Akad. Nauk SSSR Ser. Mat.},
    VOLUME = {46},
      YEAR = {1982},
    NUMBER = {3},
     PAGES = {487--523, 670},
}

\bib{L38}{article}{
	AUTHOR = {Lewy, Hans},
	TITLE = {On differential geometry in the large. {I}. {M}inkowski's
	problem},
	JOURNAL = {Trans. Amer. Math. Soc.},
	VOLUME = {43},
	YEAR = {1938},
	NUMBER = {2},
	PAGES = {258--270},
}

\bib{LSW20b}{article}{
    author={Li, Qi-Rui},
    author={Sheng, Weimin},
    author={Wang, Xu-Jia},
    title={Asymptotic convergence for a class of fully nonlinear curvature
  	       flows},
    journal={J. Geom. Anal.},
    volume={30},
    date={2020},
    number={1},
    pages={834--860},
}
\bib{Lie96}{book} {
    AUTHOR = {Lieberman, Gary M.},
     TITLE = {Second order parabolic differential equations},
 PUBLISHER = {World Scientific Publishing Co., Inc., River Edge, NJ},
      YEAR = {1996},
     PAGES = {xii+439},
}

\bib{LW13}{article}{
    author={Lu, Jian},
    author={Wang, Xu-Jia},
    title={Rotationally symmetric solutions to the $L_p$-Minkowski problem},
    journal={J. Differential Equations},
    volume={254},
    date={2013},
    number={3},
    pages={983--1005},
}

\bib{L93}{article}{
    author={Lutwak, Erwin},
    title={The Brunn-Minkowski-Firey theory. I. Mixed volumes and the
	       Minkowski problem},
    journal={J. Differential Geom.},
    volume={38},
    date={1993},
    number={1},
    pages={131--150},
}

\bib{LO95}{article}{
    author={Lutwak, Erwin},
    author={Oliker, Vladimir},
    title={On the regularity of solutions to a generalization of the
	       Minkowski problem},
    journal={J. Differential Geom.},
    volume={41},
    date={1995},
    number={1},
    pages={227--246},
}

\bib{M03}{article}{
	AUTHOR = {{M}inkowski, Hermann},
	TITLE = {Volumen und {O}berfl\"{a}che},
	JOURNAL = {Math. Ann.},
	VOLUME = {57},
	YEAR = {1903},
	NUMBER = {4},
	PAGES = {447--495},
}

\bib{N53}{article}{
	AUTHOR = {Nirenberg, Louis},
	TITLE = {The {W}eyl and {M}inkowski problems in differential geometry
	in the large},
	JOURNAL = {Comm. Pure Appl. Math.},
	VOLUME = {6},
	YEAR = {1953},
	PAGES = {337--394},
	ISSN = {0010-3640},
}

\bib{P52}{article}{
	AUTHOR = {Pogorelov, A. V.},
	TITLE = {Regularity of a convex surface with given {G}aussian
	curvature},
	JOURNAL = {Mat. Sbornik N.S.},
	VOLUME = {31(73)},
	YEAR = {1952},
	PAGES = {88--103},
}

\bib{P71}{article}{
	AUTHOR = {Pogorelov, A. V.},
	TITLE = {A regular solution of the {$n$}-dimensional {M}inkowski
	problem},
	JOURNAL = {Soviet Math. Dokl.},
	VOLUME = {12},
	YEAR = {1971},
	PAGES = {1192--1196},
	ISSN = {0197-6788},
}

\bib{SY20}{article}{
    AUTHOR = {Sheng, Weimin},
    AUTHOR = {Yi, Caihong},
     TITLE = {A class of anisotropic expanding curvature flows},
   JOURNAL = {Discrete Contin. Dyn. Syst.},
    VOLUME = {40},
      YEAR = {2020},
    NUMBER = {4},
     PAGES = {2017--2035},
 }
 
 \bib{U91}{article}{
    AUTHOR = {Urbas, John I. E.},
     TITLE = {An expansion of convex hypersurfaces},
   JOURNAL = {J. Differential Geom.},
    VOLUME = {33},
      YEAR = {1991},
    NUMBER = {1},
     PAGES = {91--125},
      }

\bib{Wang09}{article}{
 author={Wang, Xu-Jia},
 title={The $k$-Hessian equation},
 conference={
 	title={Geometric analysis and PDEs},
 },
 book={
 	series={Lecture Notes in Math.},
 	volume={1977},
 	publisher={Springer, Dordrecht},
 },
 date={2009},
 pages={177--252},
}

 \bib{Zhu15}{article}{
 author={Zhu, Guangxian},
 title={The $L_p$ Minkowski problem for polytopes for $0<p<1$},
 journal={J. Funct. Anal.},
 volume={269},
 date={2015},
 number={4},
 pages={1070--1094},
}

\end{biblist}
\end{bibdiv}
\end{document}